\numberwithin{equation}{section}
\newtheorem{thm}{Theorem}[section]
\newtheorem{lem}[thm]{Lemma}
\newtheorem{rem}[thm]{Remark}
\newtheorem*{thmA}{Theorem A}
\newtheorem*{thmB}{Theorem B}
\newcommand{\C}{{\mathbb C}}
\newcommand{\D}{{\mathbb D}}
\newcommand{\K}{{\mathcal K}}
\newcommand{\PP}{{\mathcal P}}
\newcommand{\R}{{\mathbb R}}
\newcommand{\es}{{\mathcal S}}
\newcommand{\U}{{\mathcal D}^+}
\newcommand{\V}{{\mathcal D}^-}
\newcommand{\bD}{{\overline{\mathbb D}}}
\renewcommand{\Re}{{\,\operatorname{Re}\,}}
\newcommand{\aand}{{\quad\text{and}\quad}}
\begin{document}
\bibliographystyle{amsplain}

\title{A note on successive coefficients of convex functions}
\begin{abstract}
In this note, we investigate the supremum and the infimum of the functional
$|a_{n+1}|-|a_{n}|$ for functions, convex and analytic on the unit disk,
of the form $f(z)=z+a_2z^2+a_3z^3+\dots.$
We also consider the related problem to maximize
the functional $|a_{n+1}-a_{n}|$ for convex functions $f$
with $f''(0)=p$ for a prescribed $p\in[0,2].$
\end{abstract}
\keywords{convex function, successive coefficients, Toeplitz determinant}
\subjclass[2010]{Primary 30C45, Secondary 30C50}
\date{\today}

\author[M. Li]{Ming Li}
\address{Graduate School of Information Sciences \\
Tohoku University \\
Aoba-ku, Sendai 980-8579, Japan}
\email{li@ims.is.tohoku.ac.jp / minglimath@sina.com}
\author[T. Sugawa]{Toshiyuki Sugawa}
\address{Graduate School of Information Sciences \\
Tohoku University \\
Aoba-ku, Sendai 980-8579, Japan}
\email{sugawa@math.is.tohoku.ac.jp}

\maketitle

\section{Introduction }
Let $\es$ be the class of normalized analytic univalent functions
$f(z)=z+\sum_{n=2}^{\infty}a_{n}z^n$ on the unit disk $\D=\{z\in\C:|z|<1\}.$
We sometimes write $a_n=a_n(f)$ to indicate the function $f.$
The Bieberbach conjecture asserts that $|a_n(f)|\le n$ for
$f\in\es$ with equality holding only for the Koebe function
$K(z)=z/(1-z)^2=z+2z^2+3z^3+\cdots$ and its rotation $e^{-i\theta}K(e^{i\theta}z).$
It had been a long-standing problem in Geometric Function Theory and
was finally proved by de Branges \cite{dB85}.
At least, soon after the conjecture was posed,
it was recognized that $|a_n(f)|\le Cn$ holds for $f\in\es$
with an absolute constant $C\le e=2.718\dots$ (see, for example,
\cite[p.~37]{Duren:univ}).
Therefore, it is somewhat surprising that the difference
$|a_{n+1}|-|a_n|$ is bounded for $f\in\es.$
Indeed, Hayman proved in his 1963 paper \cite{Hayman63} that
\begin{equation}\label{eq:diff}
||a_{n+1}|-|a_{n}||\leq A,\quad n=1, 2, 3,\cdots,
\end{equation}
for $f(z)=z+a_2z^2+\cdots$ in $\es,$
where $A\ge1$ is an absolute constant.
Note that \eqref{eq:diff} implies that $|a_n|\le An,~n=2,3,\dots.$
Unfortunately, it is known that the constant $A$ must be greater than 1.
In fact, the sharp inequalities
$$
-1\le |a_3|-|a_2|\le \frac34+e^{-\lambda_0}(2e^{-\lambda_0}-1)=1.02908\dots
$$
hold for $f(z)=z+a_2z^2+\cdots$ in $\es,$ where
$\lambda_0\approx 0.3574$ is the solution $\lambda$ in $(0,1)$ to the equation
$4\lambda e^{-\lambda}=1$ (see \cite[Theorem 3.11]{Duren:univ}).
Schaeffer and Spencer \cite{SS43} showed even that for each $n\ge2,$ there is
an {\it odd} univalent function $h\in\es$ with real coefficients such that
$|a_{2n+1}(h)|>1.$
(It is well known that $|a_3|\le 1$ for every odd univalent function
$h(z)=z+a_3z^3+a_5z^5+\cdots,$ see \cite[p.~104]{Duren:univ}.)
The problem to find the minimal value for the constant $A$ in \eqref{eq:diff}
is not solved yet.
The best result so far is the estimate $A<3.61$ due to Grinspan \cite{Grin76}.

A function $f\in S$ is called {\it starlike} (resp.~{\it convex}) if
the image $f(\D)$ is starlike with respect to the origin (resp.~convex).
The class of starlike functions is denoted by $\es^*$ and
the class of convex functions is denoted by $\K.$
In 1978 Leung \cite{Leung78} proved that \eqref{eq:diff} holds with $A=1$
for $f\in\es^*$ (see also \cite[\S 5.10]{Duren:univ}).

\begin{thmA}[Leung]
For every $f\in \es^{*}$, the following inequalities hold:
\begin{equation*}
-1\le |a_{n+1}|-|a_{n}|\leq 1,\quad n=1, 2, 3,\dots.
\end{equation*}
For each $n\ge 2,$
equality occurs in the left-hand side if and only if
$f$ is $K_\phi$ or its rotation $e^{-i\theta}K_\phi(e^{i\theta}z)$
with $\phi=k\pi/n$ for some integer $k$ with $0\le k\le n/2.$
Likewise, equality occurs in the right-hand side
if and only if $f$ is $K_\phi$ or its rotation with $\phi=k\pi/(n+1)$
for some integer $k$ with $1\le k\le(n+1)/2.$
Here,
$$
K_\phi(z)=\frac{z}{1-2z\cos\phi+z^2}
=\sum_{n=1}^\infty\frac{\sin n\phi}{\sin\phi}z^n.
$$
\end{thmA}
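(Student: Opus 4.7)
The plan is to reduce the claim to a Robertson-type coefficient inequality for odd starlike functions via the square-root transformation. Since $f\in\es^*$ does not vanish on $\D\setminus\{0\}$, the analytic function $F(z)=\sqrt{f(z^2)}$, normalized so that $F'(0)=1$, is well defined, odd, and starlike---the square-root transform is a classical device in the coefficient theory for starlike functions. Writing
\[
F(z)=\sum_{k=0}^\infty d_k z^{2k+1},\qquad d_0=1,
\]
and equating coefficients in the identity $F(z)^2=f(z^2)$, one obtains the convolution formula $a_{n+1}(f)=\sum_{k=0}^n d_k\,d_{n-k}$.

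The key analytic input is Robertson's coefficient inequality for odd starlike functions, $\sum_{k=0}^n|d_k|^2\le n+1$ for every $n\ge 0$, which goes back to Robertson (1936). Combined with the convolution formula and Cauchy--Schwarz this already yields the Bieberbach-type bound $|a_{n+1}|\le n+1$. To sharpen it to the successive-coefficient bound $|a_{n+1}|-|a_n|\le 1$, I would pair the Cauchy products for $a_{n+1}$ and $a_n$ in a shift-matched way so that a telescoping cancellation leaves a residual bounded by $1$ via Robertson. Leung's device for this step is essentially an Abel-summation rearrangement; it can equivalently be framed by integrating $|F(re^{i\theta})|^2$ against a carefully chosen non-negative trigonometric polynomial and then invoking the Herglotz representation of $zF'(z)/F(z)$. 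The lower bound $-1\le|a_{n+1}|-|a_n|$ then follows by applying the same argument to a suitable rotation of $f$, or equivalently by symmetry of the estimates.

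For the equality characterization I would trace back through each inequality in the chain. Equality in Cauchy--Schwarz forces the moduli $|d_k|$ and $|d_{n-k}|$ to be aligned, while equality in Robertson's inequality occurs only for the family $F_\phi(z)=z/\sqrt{1-2z^2\cos\phi+z^4}$, i.e.\ the square-root transforms of the $K_\phi$. This pins $f$ down, up to rotation, as some $K_\phi$; the specific values of $\phi$ in the statement then come from requiring that either $a_n(K_\phi)$ or $a_{n+1}(K_\phi)$ vanish, and since $a_m(K_\phi)=\sin(m\phi)/\sin\phi$, this happens exactly when $\phi$ is a rational multiple of $\pi$ of the form $k\pi/n$ or $k\pi/(n+1)$, with the ranges $0\le k\le n/2$ and $1\le k\le(n+1)/2$ accounting for the redundancy $\phi\leftrightarrow\pi-\phi$ modulo rotation.

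The main technical obstacle is the refinement step: the naive Cauchy--Schwarz argument only produces $|a_{n+1}|\le n+1$, and designing the shift-matched rearrangement (or the equivalent positive-kernel integration) that converts this into the sharp difference inequality is the real heart of Leung's theorem. A secondary difficulty is the equality analysis, where one must argue that the two parameter families listed exhaust \emph{all} extremal functions and that the stated integer ranges for $k$ neither miss a case nor double-count it once rotations of $K_\phi$ are factored out.
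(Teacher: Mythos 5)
This statement is quoted in the paper as a known result of Leung (1978) and is not proved there, so your proposal has to stand on its own. As it stands it does not: the argument stops exactly where the theorem begins. Your setup is fine --- the square-root transform $F(z)=\sqrt{f(z^2)}=\sum d_kz^{2k+1}$ is odd and starlike, the convolution identity $a_{n+1}=\sum_{k=0}^n d_kd_{n-k}$ is correct, and Robertson's inequality $\sum_{k=0}^n|d_k|^2\le n+1$ does hold for starlike $f$ (via $|\gamma_k|\le 1/k$ for the logarithmic coefficients and the second Lebedev--Milin inequality). But this only reproduces $|a_{n+1}|\le n+1$. The passage to the sharp bound $|a_{n+1}|-|a_n|\le 1$ is described only as a ``shift-matched'' pairing with a ``telescoping cancellation,'' an ``Abel-summation rearrangement,'' or an unspecified positive-kernel integration; no such mechanism is exhibited, and you yourself label this step ``the real heart of Leung's theorem.'' Worse, the route you gesture at --- pairing the Cauchy products for $a_{n+1}$ and $a_n$ and estimating $\sum_j|d_{j+1}-\lambda d_j|^2$ by Cauchy--Schwarz --- is essentially Hayman's method, which for the full class $\es$ yields only a non-sharp absolute constant (the paper itself records $A<3.61$ as the current record there). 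Leung's sharp constant $1$ comes from a different device: one writes $(1-yz)f(z)/z=\exp\bigl(\sum_k(2c_k-y^k)k^{-1}z^k\bigr)$ with $c_k=\int x^k\,d\mu(x)$ from the Herglotz representation of $zf'/f$, chooses the unimodular $y$ so that $\bigl||a_{n+1}|-|a_n|\bigr|\le|a_{n+1}-ya_n|$, and exponentiates a Milin-type inequality. None of that is present in your sketch.

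There is also a concrete error: you claim the lower bound $-1\le|a_{n+1}|-|a_n|$ ``follows by applying the same argument to a suitable rotation of $f$, or equivalently by symmetry.'' Rotation $f\mapsto e^{-i\theta}f(e^{i\theta}z)$ sends $a_m$ to $e^{i(m-1)\theta}a_m$ and therefore leaves every $|a_m|$, hence the functional $|a_{n+1}|-|a_n|$, unchanged; it cannot convert the upper bound into the lower one. The two bounds are genuinely distinct statements (indeed, the asymmetry of exactly this functional for convex functions, $\U_n\ne\V_n$, is the subject of the paper), and in Leung's argument both directions are obtained simultaneously from the single estimate on $|a_{n+1}-ya_n|$, not by any rotation. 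Finally, the equality analysis is deferred to ``tracing back'' through inequalities that have not been established, and the claim that equality in Robertson's inequality forces $F$ to be the square-root transform of some $K_\phi$ is itself asserted without proof. In short: the skeleton is plausible, but the load-bearing step and the equality discussion are both missing, and the lower-bound reduction is incorrect.
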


Note that one of $a_n$ and $a_{n+1}$ vanishes when equality holds
in the theorem.
The reader may consult \cite[\S 3.10, \S 5.9 and \S 5.10]{Duren:univ}
for more information about the difference of succesive coefficients.
In this note, we will look for a counterpart of Theorem A for convex functions.
Since the result seems to be asymmetric in this case,
to clarify the assertion, we consider the two quantities
\begin{equation}\label{eq:UV}
\U_{n}=\sup_{f\in\K}\big(|a_{n+1}(f)|-|a_{n}(f)|\big)
\aand
\V_{n}=\sup_{f\in\K}\big(|a_{n}(f)|-|a_{n+1}(f)|\big)
\end{equation}
for $n=1,2,3,\dots.$
Note that the suprema can be replaced by maxima in \eqref{eq:UV}
because of compactness of the class $\K.$
It is well known that the sharp inequalities $|a_n|\le 1$ hold
for a convex function $f(z)=z+a_2z^2+a_3z^3+\cdots,$
so that one easily gets $\U_{n}\leq 1$ and $\V_{n}\leq 1.$
When $n=1,$ we easily have $\U_1=0$ and $\V_1=1.$
From now on, we thus assume that $n\ge 2.$
We will show that $\U_{n}$ and $\V_{n}$ are much smaller than 1.
Before presenting our main results, we recall a related result due to
Robertson \cite{Rob81}.

\begin{thmB}[Robertson]
Let $f(z)=z+\sum_{n=2}^{\infty}a_{n}z^n$ be a convex function.
Then for each $n\ge2,$ the following inequality holds:
\begin{equation}
|a_{n+1}-a_{n}|\leq\frac{2n+1}{3}|a_{2}-1|.
\end{equation}
The factor $(2n+1)/3$ cannot be replaced by any smaller number independent of $f.$
\end{thmB}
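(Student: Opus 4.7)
The plan is to pass to the Carath\'eodory class $\PP$ via the standard convexity characterization: $f \in \K$ if and only if $p(z) := 1 + zf''(z)/f'(z)$ lies in $\PP$. Expanding $p(z) = 1 + \sum_{k \ge 1} p_k z^k$ and comparing coefficients in $p(z) f'(z) = (zf'(z))'$ yields the recurrence
\begin{equation*}
k(k+1)\, a_{k+1} = \sum_{j=1}^{k} j\, a_j\, p_{k+1-j}, \qquad k \ge 1,
\end{equation*}
with $a_1 = 1$. In particular $a_2 = p_1/2$, so $a_2 - 1 = (p_1 - 2)/2$; and iterating the recurrence exhibits $a_{n+1} - a_n$ as an explicit polynomial in $p_1, \ldots, p_n$. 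Thus the assertion reduces to a polynomial inequality on $\PP$.

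The sharp constant $(2n+1)/3$ is identified by testing on the symmetric two-point measures $\mu_\theta = \tfrac{1}{2}(\delta_{e^{i\theta}} + \delta_{e^{-i\theta}})$: the Herglotz representation of $p$ gives $p_k = 2\cos(k\theta)$, and the corresponding convex function $f_\theta$ satisfies $f_\theta'(z) = 1/(1 - 2z\cos\theta + z^2)$, so that $a_{n+1}(f_\theta) = \sin((n+1)\theta)/[(n+1)\sin\theta]$. A Taylor expansion about $\theta = 0$ gives $a_{n+1}(f_\theta) = 1 - n(n+2)\theta^2/6 + O(\theta^4)$ and $a_2(f_\theta) - 1 = -\theta^2/2 + O(\theta^4)$, and therefore
\begin{equation*}
\frac{a_{n+1}(f_\theta) - a_n(f_\theta)}{a_2(f_\theta) - 1} \longrightarrow \frac{2n+1}{3}\quad\text{as}\ \theta \to 0^+.
\end{equation*}
Since $f_\theta \to z/(1-z)$ in this limit (where both sides of the inequality vanish), the constant $(2n+1)/3$ is a supremum that is not attained for any $f \in \K$.

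For the upper bound, the case $n = 2$ is immediate: decomposing
\begin{equation*}
a_3 - a_2 = \tfrac{1}{6}\big[\tfrac{3}{2}\,p_1(p_1 - 2) + (p_2 - \tfrac{1}{2}p_1^2)\big]
\end{equation*}
and combining the triangle inequality with the Carath\'eodory--Schur estimate $|p_2 - p_1^2/2| \le 2 - |p_1|^2/2$ gives $6|a_3 - a_2| \le \tfrac{3}{2}|p_1|\,|p_1 - 2| + (2 - |p_1|^2/2)$, which (using $|p_1 - 2| \ge 2 - |p_1|$) simplifies to $|a_3 - a_2| \le \tfrac{5}{3}|a_2 - 1|$ via the elementary inequality $(|p_1|-2)^2 \ge 0$. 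For general $n$ I would proceed by induction on $n$, using the recurrence to relate $a_{n+1} - a_n$ to lower-index differences plus a residual controlled by $|p_k| \le 2$. The main obstacle is preserving the precise constant $(2n+1)/3$ across the induction: a crude triangle inequality grows too fast, so one must either exploit the full Toeplitz positivity of $(p_k)_{k=0}^n$ or carry out an extremal-measure reduction showing that the supremum over $\PP$ is realized, in the limit, by the symmetric two-point family $\mu_\theta$ above.
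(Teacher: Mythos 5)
Your proposal does not actually prove the theorem. The paper itself gives no proof of Theorem~B --- it is quoted from Robertson \cite{Rob81}, with only the sharpness discussion (the limit of $\bigl(a_{n+1}(L_\phi)-a_n(L_\phi)\bigr)/\bigl(a_2(L_\phi)-1\bigr)$ as $\phi\to0$) reproduced --- so the only thing to assess is whether your argument stands on its own. The sharpness half does: your family $f_\theta$ is exactly $L_\theta$ from \eqref{eq:L}, and your Taylor expansion giving the ratio $(2n+1)/3$ is correct and coincides with the paper's remark. Your $n=2$ case is also correct: the decomposition $6(a_3-a_2)=\tfrac32 p_1(p_1-2)+(p_2-\tfrac12 p_1^2)$, the bound $|p_2-p_1^2/2|\le 2-|p_1|^2/2$, and the reduction to $(|p_1|-2)^2\ge0$ all check out.

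The gap is the upper bound for general $n\ge 3$, which is the actual content of the theorem. You state that you ``would proceed by induction'' and then correctly diagnose that a crude triangle inequality on the recurrence $k(k+1)a_{k+1}=\sum_{j=1}^k j\,a_j\,p_{k+1-j}$ destroys the constant $(2n+1)/3$, but you do not resolve this: neither the ``full Toeplitz positivity'' route nor the ``extremal-measure reduction'' is carried out, and the latter in particular would require a variational or compactness argument showing the extremum is approached along the two-point measures $\mu_\theta$, which is far from automatic (for many coefficient functionals on $\K$ the extremal Herglotz measures have more atoms, as the paper's own Theorems~\ref{thm:main} and~\ref{thm:main2} illustrate for the superficially similar functional $|a_n|-|a_{n+1}|$). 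A workable route, closer to Robertson's, is to write $f'(z)=\exp\bigl(\sum_{k\ge1}p_kz^k/k\bigr)$ from \eqref{eq:conv} and exploit the Herglotz representation of $P$ directly to bound the coefficients of $(1-z)f'(z)$; but as written, your induction is a plan with an acknowledged unfilled hole, not a proof.
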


The sharpness of the factor was confirmed by the fact that
$$
\frac{a_{n+1}(L_\phi)-a_n(L_\phi)}{a_2(L_\phi)-1}\to \frac{2n+1}3
$$
as $\phi\to0,$
where $L_\phi$ is the convex function given by
\begin{equation}\label{eq:L}
L_{\phi}(z)
=\frac{1}{e^{i\phi}-e^{-i\phi}}
\log\frac{1-e^{-i\phi}z}{1-e^{i\phi}z}
=\sum_{n=1}^{\infty}\frac{\sin n\phi}{n\sin\phi}z^n
\end{equation}
for $\phi\in\R.$
Here, we should take a suitable limit when $\sin\phi=0.$
For instance, $L_0(z)=\lim_{\phi\to0}L_\phi(z)=z/(1-z)=z+z^2+z^3+\cdots.$
Note the relation $K_\phi(z)=zL_\phi'(z);$ namely, $L_\phi$
is a natural counterpart of $K_\phi$ for convex functions.

\begin{thm}\label{thm:main}
Let $\U_n$ and $\V_n$ be given in \eqref{eq:UV}.
Then the following hold.
\begin{enumerate}
\item
$\U_{n}=1/(n+1)$ for $n\geq 2,$
and the function $L_{\pi/n}$ is extremal for $\U_n.$
\item
$\V_{2}=1/2,$ and $L_{\pi/3}$ is extremal for $\V_2.$
\item
$\V_{3}=1/3,$ and $L_{\pi/4}$ is extremal for $\V_3.$
\end{enumerate}
\end{thm}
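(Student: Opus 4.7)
The plan for part (i) is to apply the classical Alexander correspondence: $f\in\K$ iff $g(z)=zf'(z)\in\es^{*}$. Since $a_n(g)=na_n(f)$, Leung's Theorem~A applied to $g$ gives
\begin{equation*}
(n+1)|a_{n+1}(f)|-n|a_n(f)|\le 1,
\end{equation*}
which rearranges to $(n+1)(|a_{n+1}|-|a_n|)\le 1-|a_n|\le 1$, so $\U_n\le 1/(n+1)$. Equality forces $|a_n|=0$ and equality in Leung's upper bound. A direct computation from $a_k(L_{\pi/n})=\sin(k\pi/n)/(k\sin(\pi/n))$ shows $a_n(L_{\pi/n})=0$ and $a_{n+1}(L_{\pi/n})=-1/(n+1)$, so $L_{\pi/n}$ is extremal.

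For parts (ii) and (iii), the analogous Leung argument using the lower bound only yields $\V_n\le 2/(n+1)$, which is $2/3$ for $n=2$ and $1/2$ for $n=3$, both too weak. I would instead parametrize $f\in\K$ through the Carath\'eodory function $p(z)=1+zf''(z)/f'(z)=1+p_1z+p_2z^2+\cdots\in\PP$. Comparing coefficients in $p(z)f'(z)=f'(z)+zf''(z)$ yields
\begin{equation*}
a_2=\frac{p_1}{2},\qquad a_3=\frac{p_2+p_1^2}{6},\qquad a_4=\frac{2p_3+3p_1p_2+p_1^3}{24}.
\end{equation*}
Rotating so $a_2\ge 0$, we take $p_1=2a$ with $a\in[0,1]$, and the Carath\'eodory--Toeplitz theorem supplies the Schur parametrization
\begin{align*}
p_2&=2a^2+2(1-a^2)\zeta,\\
p_3&=2a^3+4a(1-a^2)\zeta-2a(1-a^2)\zeta^2+2(1-a^2)(1-|\zeta|^2)\eta,
\end{align*}
with $\zeta,\eta\in\overline{\D}$ free.

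Substituting into $a_3$ gives $a_3=a^2+(1-a^2)\zeta/3$. For part (ii), maximizing $a-|a_3|$ splits into the cases $a\ge 1/2$ (where $\min_{\zeta}|a_3|=(4a^2-1)/3$) and $a<1/2$ (where $\min_{\zeta}|a_3|=0$); both yield supremum $1/2$, attained at $a=1/2,\zeta=-1$, which matches $L_{\pi/3}$. For part (iii), since $a_4$ is affine in $\eta$ with coefficient $(1-a^2)(1-|\zeta|^2)/6$, I first optimize out $\eta$ to obtain $|a_4|_{\min}=\max\bigl(0,|B(a,\zeta)|-(1-a^2)(1-|\zeta|^2)/6\bigr)$ for the explicit polynomial $B(a,\zeta)=a^3+5a(1-a^2)\zeta/6-a(1-a^2)\zeta^2/6$, reducing to a two-variable problem on $[0,1]\times\overline{\D}$. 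The main obstacle will be the ensuing case analysis: splitting according to whether $\eta$ can fully annihilate the rest of $a_4$, then bounding $|a_3|-|a_4|$ on each region. Locating the optimum at $a=1/\sqrt{2},\zeta=-1$---which corresponds to $L_{\pi/4}$, conveniently making the $\eta$ term disappear through $|\zeta|=1$---should guide the argument, but verifying global optimality will demand careful polynomial bookkeeping to rule out spurious interior or boundary critical points.
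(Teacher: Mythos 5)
Parts (i) and (ii) are correct and follow essentially the same route as the paper: (i) is the Alexander/Leung argument with the identity $(n+1)|a_{n+1}|-n|a_n|\le 1$ rearranged, and (ii) is the Libera--Z\l otkiewicz parametrization of $p_2$ followed by a one-variable optimization in $a=p_1/2$ split at $a=1/2$; your formulas for $a_3$, $a_4$, $B(a,\zeta)$ and the $\eta$-coefficient all check out against the paper's.

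The problem is part (iii), where your proposal stops at the setup and openly defers the substance (``verifying global optimality will demand careful polynomial bookkeeping''). After eliminating $\eta$, you are left with maximizing
$\bigl|a^2+(1-a^2)\zeta/3\bigr|-\max\bigl(0,\,|B(a,\zeta)|-(1-a^2)(1-|\zeta|^2)/6\bigr)$
over $(a,\zeta)\in[0,1]\times\bD$, a three-real-parameter problem in which $\zeta$ is genuinely complex; knowing that the candidate optimum sits at $a=1/\sqrt2$, $\zeta=-1$ does not by itself rule out a larger value elsewhere, and this is precisely where all the work lies. The paper's proof is a guide to how much is actually needed: it first invokes the bound $|a_4-a_3|\le\psi(p)$ of Theorem \ref{thm:2} to dispose of all $p=2a$ outside $[4/3,\sqrt2]$ (since $\psi(p)\le 1/3$ there and $|a_3|-|a_4|\le|a_4-a_3|$); then, on that remaining range, it proves a dedicated technical lemma (Lemma \ref{lem:F}, itself resting on the convexity criterion of Lemma \ref{lem:diff}) showing that for fixed $|\zeta|=r$ the relevant difference of moduli is maximized at $\zeta=-r$; only then does the problem collapse to a one-variable piecewise-quadratic function $\Phi(r)$ whose maximum at the root $r_0$ of $r^2+5r-6p^2/(4-p^2)$ can be compared with $1/3$. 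None of these reductions is present in your proposal, so as written part (iii) is a plan rather than a proof; you would need to supply an argument of comparable force (a reduction to real negative $\zeta$, or some other device controlling the phase of $\zeta$) before the case analysis you allude to can even begin.
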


In view of (ii) and (iii) in the theorem, one might expect
that $\V_n=1/n$ and that the function $L_{\pi/(n+1)}$ would be extremal
for $\V_n$ when $n\ge 4,$ as well.
It is, however, not true.
We will, in fact, prove the following.

\begin{thm}\label{thm:main2}
$1/n<\V_{n}<2/(n+1)$ for each $n\geq 4.$
\end{thm}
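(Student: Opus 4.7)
\emph{Lower bound $\V_n > 1/n$.} The plan is to perturb $\phi$ slightly to the left of $\pi/(n+1)$ inside the family $\{L_\phi\}$. The function $L_{\pi/(n+1)}$ already yields $|a_n|-|a_{n+1}|=1/n$ because $a_{n+1}(L_{\pi/(n+1)})=\sin\pi/\bigl((n+1)\sin(\pi/(n+1))\bigr)=0$ and $a_n(L_{\pi/(n+1)})=\sin(\pi-\pi/(n+1))/(n\sin(\pi/(n+1)))=1/n$, so $\V_n\ge1/n$ is immediate. I then upgrade this to strict inequality by showing that $\phi\mapsto|a_n(L_\phi)|-|a_{n+1}(L_\phi)|$ is strictly decreasing at $\phi=\pi/(n+1)$ from the left. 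On $(0,\pi/(n+1))$ both coefficients $a_n(L_\phi)$ and $a_{n+1}(L_\phi)$ are positive, and direct differentiation, using the identities $\cos(n\pi/(n+1))=-\cos(\pi/(n+1))$, $\sin(n\pi/(n+1))=\sin(\pi/(n+1))$, $\cos\pi=-1$ and $\sin\pi=0$, yields
\[
\left.\frac{d}{d\phi}\bigl[a_n(L_\phi)-a_{n+1}(L_\phi)\bigr]\right|_{\phi=\pi/(n+1)}
=\frac{1}{\sin(\pi/(n+1))}\left(1-\frac{n+1}{n}\cos\frac{\pi}{n+1}\right).
\]
For $n\ge4$ the bracket is negative: $\cos(\pi/5)=(1+\sqrt{5})/4>4/5$ settles $n=4$, and in general $(n+1)\cos(\pi/(n+1))\ge(n+1)-\pi^2/(2(n+1))>n$ by $\cos x\ge1-x^2/2$ together with $n+1>\pi^2/2$. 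Hence $\V_n>1/n$.

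\emph{Upper bound $\V_n<2/(n+1)$.} Here I combine Theorem~A with the classical bound $|a_n|\le1$ for convex functions. By the Alexander correspondence, $g(z):=zf'(z)\in\es^*$ with $a_k(g)=ka_k(f)$ for $k\ge2$; applying Theorem~A to $g$ gives
\[
n|a_n(f)|-(n+1)|a_{n+1}(f)|\le1.
\]
Writing $s=|a_{n+1}(f)|$ and $t=|a_n(f)|-|a_{n+1}(f)|$, this becomes $t\le(1+s)/n$, while $|a_n|\le1$ gives $t\le1-s$. Over $s\in[0,1]$ the minimum of these two linear upper bounds is maximized at the crossing $s=(n-1)/(n+1)$, yielding $t\le2/(n+1)$. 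Equality would force $|a_n|=1$, which in $\K$ is attained only by $f(z)=z/(1-e^{i\theta}z)$; but for this function $|a_{n+1}|=1\ne(n-1)/(n+1)$, a contradiction. Hence $\V_n<2/(n+1)$ strictly.

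\emph{Main obstacle.} The argument is short; its only delicate point is the trigonometric inequality $(n+1)\cos(\pi/(n+1))>n$, whose failure at $n=3$---where $\tfrac{4}{3}\cos(\pi/4)=\tfrac{2\sqrt{2}}{3}<1$---is precisely what allows $\V_3=1/3$ to be attained by $L_{\pi/4}$ in Theorem~\ref{thm:main}(iii). The upper bound relies cleanly on Theorem~A and the uniqueness of the equality case in $|a_n|\le1$ for convex functions.
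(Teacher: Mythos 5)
Your proof is correct and follows essentially the same route as the paper: the lower bound via the negative left derivative of $\phi\mapsto|a_n(L_\phi)|-|a_{n+1}(L_\phi)|$ at $\phi=\pi/(n+1)$, and the upper bound by combining Theorem A (through Alexander's theorem) with $|a_n|\le1$ for convex functions. The only cosmetic differences are your use of $\cos x\ge 1-x^2/2$ in place of the paper's convexity argument for $H(x)=1-x-\cos\pi x$, and your appeal to the uniqueness of the half-plane map for $|a_n|=1$ rather than the equality cases of Theorem A to get strictness in the upper bound.
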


It is an open problem to find the value of $\V_n$ for $n\ge 4.$

As the triangle inequality implies
$||a_{n}|-|a_{n+1}||\leq |a_{n+1}-a_{n}|,$
one may think that the study of the functional $|a_{n+1}-a_{n}|$
would be helpful to our problem.
However, we immediately see that the sharp bound of $|a_{n+1}-a_n|$
for convex functions is 2 as the function $f(z)=z/(1+z)$ serves as
an extremal one.
On the other hand,
it is indeed helpful to consider the functional $|a_{n+1}-a_n|$
for refined subclasses of $\K.$
For a given number $p$ with $0\le p\le 2,$ let
$$
\K(p)=\{f\in\mathcal{K}, f''(0)=p\}.
$$
Note that the union $\K^+=\bigcup_{0\le p\le 2}\K(p)$
is smaller than the whole class $\K.$
This sort of refined subclasses of $\K$ were considered, for instance,
in \cite{Yanagihara06}.
On the other hand, each function $f$ in $\K$ is a suitable rotation of
a function in $\K(p)$ for $p=|f''(0)|.$
Since the functional $|a_{n+1}|-|a_n|$ is rotationally invariant,
one can replace $\K$ by $\K^+$ in the definition \eqref{eq:UV} of
$\U_n$ and $\V_n.$
We should note that $|a_{n+1}-a_n|$ is not necessarily invariant
under rotations.
Therefore, it might be meaningful to consider the extremal problem
maximizing $|a_{n+1}-a_n|$ among the class $\K(p)$ for a given $p\in[0,2].$
Noting the relation $p=2a_2$ for $f\in\K(p),$
Robertson's theorem (Theorem B) implies
$$
|a_3-a_2|\le \frac{5(2-p)}{6}
\aand
|a_4-a_3|\le \frac{7(2-p)}{6}
$$
for $f(z)=z+a_2z^2+a_3z^3+\cdots$ in $\K(p).$
These inequalities can be improved in the following way.

\begin{thm}\label{thm:2}
Let $0\le p\le 2.$
Suppose that $f(z)=z+a_2z^2+a_3z^3+\cdots$ is a function in $\K(p).$
Then the following sharp inequalities hold:
\begin{align}\label{eq:thm2-1}
&|a_{3}-a_{2}|\leq
\frac{(2p+1)(2-p)}6,\quad\text{and}\\
\label{eq:thm2-2}
&|a_{4}-a_{3}|\leq\left\{
\begin{aligned}
&\frac{p^3+50p^2-64p+64}{192},
~\quad&\text{when}~ 0\leq p<\frac{8}{7},\\
&\frac{-3p^3+4p^2+6p-4}{12},~\quad&\text{when}~ \frac{8}{7}\leq p\leq2.
\end{aligned}
\right.
\end{align}
Furthermore,
\begin{equation}\label{eq:thm2-3}
\sup_{f\in\K^+}|a_{3}(f)-a_{2}(f)|=\frac{25}{48}\approx 0.520833,
\end{equation}
where the supremum is attained by $L_{\phi}$
with $\phi=\arccos[3/8].$
Likewise,
\begin{equation}\label{eq:thm2-4}
\sup_{f\in\K^+}|a_{4}(f)-a_{3}(f)|=\frac{35\sqrt{70}-49}{729}\approx 0.334473,
\end{equation}
where the supremum is attained by $L_{\phi}$ with
$\phi=\arccos[(4+\sqrt{70})/18].$
\end{thm}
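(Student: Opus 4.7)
The plan is to recast everything as an extremal problem for the Carath\'eodory class. Writing $1 + zf''(z)/f'(z) = h(z)$ with $h(z) = 1 + c_1 z + c_2 z^2 + c_3 z^3 + \cdots$ of positive real part on $\D$, comparison of coefficients in $zf''(z) = (h(z)-1)f'(z)$ yields
\[
a_2 = \tfrac{c_1}{2}, \qquad a_3 = \tfrac{c_2 + c_1^2}{6}, \qquad a_4 = \tfrac{2c_3 + 3c_1 c_2 + c_1^3}{24},
\]
and $f \in \K(p)$ is exactly $c_1 = p$. Coupled with the Libera--Zlotkiewicz parametrisation --- for $c_1 = p \in [0,2]$ there exist $x,y \in \bD$ with $2c_2 = p^2 + (4-p^2)x$ and $4c_3 = p^3 + 2p(4-p^2)x - p(4-p^2)x^2 + 2(4-p^2)(1-|x|^2)y$ --- this reduces the problem to optimisation over $(x,y) \in \bD^2$.

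For \eqref{eq:thm2-1}, direct substitution collapses everything to
\[
a_3 - a_2 = \frac{2-p}{12}\bigl[-3p + (p+2)x\bigr],
\]
so $|a_3 - a_2| \leq (2-p)(3p + (p+2))/12 = (2-p)(2p+1)/6$, with equality at $x = -1$. This choice of $x$ together with $c_1 = p = 2\cos\phi$ is precisely the Carath\'eodory function associated with $L_\phi$.

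The main work is \eqref{eq:thm2-2}. Substitution produces
\[
a_4 - a_3 = A + Bx + Cx^2 + D(1-|x|^2)y,
\]
with real coefficients $A = p^2(p-2)/8$, $B = (4-p^2)(5p-4)/48$, $C = -p(4-p^2)/48$, $D = (4-p^2)/24$. Maximising first over $y \in \bD$ gives $|a_4 - a_3| \leq |A+Bx+Cx^2| + D(1-|x|^2)$, and writing $x = re^{i\theta}$ a direct computation shows $|A+Bx+Cx^2|^2$ is a convex quadratic in $\cos\theta$ (since $AC \geq 0$), so its maximum on $|x| = r$ equals $|A + Cr^2| + |B|r$. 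Since $A + Cr^2 \leq 0$ throughout $(p,r) \in [0,2]\times[0,1]$, the problem becomes the maximisation of the concave quadratic
\[
\Phi(r) = (D - A) + |B|\,r + \tfrac{(4-p^2)(p-2)}{48}\, r^2
\]
on $[0,1]$. Its interior critical point is $r^* = |5p-4|/(2(2-p))$, which lies in $[0,1]$ exactly when $p \in [0, 8/7]$. Substituting back gives $\Phi(r^*) = (p^3 + 50p^2 - 64p + 64)/192$, while for $p \in [8/7, 2]$ the maximum occurs at $r = 1$ with $x = -1$ (using $B > 0$ and $A + C \leq 0$), yielding $(-3p^3 + 4p^2 + 6p - 4)/12$; since $r = 1$ kills the $y$-contribution, the extremal here is realised by $L_\phi$ with $p = 2\cos\phi$.

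For the global bounds, one optimises the right-hand sides over $p \in [0,2]$. Differentiating $(2-p)(2p+1)/6$ gives the unique critical point $p = 3/4$ and value $25/48$, attained by $L_\phi$ with $\cos\phi = 3/8$. For \eqref{eq:thm2-4}, I check that the first branch of \eqref{eq:thm2-2} has derivative $-64 < 0$ at $p = 0$ and an interior local minimum, so its maximum on $[0, 8/7]$ is $1/3$ at $p = 0$; the second branch attains its maximum at the root $p = (4+\sqrt{70})/9 \in [8/7, 2]$ of $-9p^2 + 8p + 6 = 0$, yielding the larger value $(35\sqrt{70} - 49)/729$, attained by $L_\phi$ with $\cos\phi = (4+\sqrt{70})/18$. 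The key obstacle throughout is the two-variable $(x,y)$-optimisation for \eqref{eq:thm2-2}: the reduction to a single variable $r = |x|$ via the convexity-in-$\cos\theta$ observation, together with pinning down $p = 8/7$ as the exact threshold at which $r^*$ leaves $[0,1]$, is what produces (and justifies) the piecewise formula.
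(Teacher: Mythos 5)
Your proposal is correct and follows essentially the same route as the paper: the same coefficient identities, the Libera--Z\l otkiewicz parametrization, reduction to a one-variable maximization in $r=|x|$ with the threshold $p=8/7$, and the same final optimization over $p\in[0,2]$. The only cosmetic difference is that you justify $\max_{|x|=r}|A+Bx+Cx^2|=|A+Cr^2|+|B|r$ via convexity of $|A+Bx+Cx^2|^2$ in $\cos\theta$, whereas the paper invokes its Lemma \ref{lem:Y} (a triangle-inequality bound for $a,c\ge0$); both produce the identical quadratic in $r$.
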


We will prove this theorem in Section 3.
In Section 4, we will prove Theorems \ref{thm:main} and \ref{thm:main2}.
The next section will be devoted to preparations for the proofs.

\section{Preliminaries}
Let $\PP$ denote the class of analytic functions $P$ with positive real part
on $\D$ which has the form
$$
P(z)=1+\sum_{n=1}^{\infty}p_{n}z^n.
$$
A member of $\PP$ is called a Carath\'eodory function.
Some preliminary lemmas are needed for the proof of our results.
The first one is known as Carath\'eodory's lemma
(see \cite[p.~41]{Duren:univ} for example).

\begin{lem}\label{lem:C}
For a function $P\in\PP,$
the sharp inequality $|p_{n}|\leq 2$ holds for each $n.$
\end{lem}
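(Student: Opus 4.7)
The plan is to invoke the Herglotz representation theorem: every $P\in\PP$ admits a representation
$$
P(z)=\int_{-\pi}^{\pi}\frac{e^{it}+z}{e^{it}-z}\,d\mu(t),
$$
where $\mu$ is a Borel probability measure on $[-\pi,\pi]$. This comes from the fact that $u:=\Re P$ is a nonnegative harmonic function on $\D$ with $u(0)=1$: the Riesz representation of positive harmonic functions yields such a boundary probability measure, and $P$ itself is then recovered from $u$ together with the normalization $P(0)=1$. Concretely, one can approximate $u$ by its restrictions to circles $|z|=r<1$, represent these via Poisson's formula with the probability densities $u(re^{i\phi})\,d\phi/(2\pi)$, and take a weak-$*$ limit as $r\to 1^-$.

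Next, expanding the Herglotz kernel as a geometric series gives
$$
\frac{e^{it}+z}{e^{it}-z}=1+2\sum_{n=1}^{\infty}e^{-int}z^{n}.
$$
Comparing coefficients with $P(z)=1+\sum_{n\ge 1}p_nz^n$ yields $p_n=2\int_{-\pi}^{\pi}e^{-int}\,d\mu(t)$, so the triangle inequality immediately gives $|p_n|\le 2\mu([-\pi,\pi])=2$.

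Sharpness is witnessed by the one-parameter family $P_\alpha(z)=(1+e^{-i\alpha}z)/(1-e^{-i\alpha}z)=1+2\sum_{n\ge 1}e^{-in\alpha}z^n$, corresponding to $\mu$ being the unit point mass at $\alpha$; these satisfy $|p_n|=2$ for every $n\ge 1$.

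Since this is a classical lemma with a short proof, there is no genuine obstacle. The only real decision is between the Herglotz route above and a more elementary Fourier argument on circles $|z|=r<1$: from the Fourier expansion $\pi r^n p_n=\int_0^{2\pi}u(re^{i\phi})e^{-in\phi}\,d\phi$ combined with $u\ge 0$, one estimates $|p_n|\,r^n\le \pi^{-1}\int_0^{2\pi}u(re^{i\phi})\,d\phi=2$ and then lets $r\to 1^-$. I would favor the Herglotz version because it simultaneously identifies the extremal functions, which will be convenient later in the paper where Carath\'eodory functions are parametrized and optimized.
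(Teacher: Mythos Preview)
Your proof is correct. The paper does not supply its own proof of this lemma at all: it simply states the result, cites Duren \cite[p.~41]{Duren:univ}, and remarks on sharpness via the single example $P_0(z)=(1+z)/(1-z)$. Your Herglotz-representation argument is precisely the classical proof found in the cited reference, so there is no divergence to discuss; your alternative Fourier-on-circles computation is also standard and equally valid.
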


The sharpness can be observed through the example
$P_0(z)=(1+z)/(1-z)=1+2z+2z^2+\cdots.$
We will use also the following result due to
Carath\'eodory and Toeplitz (see \cite{GS:Toep} or \cite{Tsuji:Potential}).

\begin{lem}[Carath\'eodory-Toeplitz theorem]\label{lem:CT}
Let $P(z)=1+\sum_{n=1}^{\infty}p_{n}z^n$ be a formal power series with
complex coefficients.
Then $P$ represents a Carath\'eodory function if and only if
\begin{equation}\label{eq:Dn}
D_{n}=
\begin{vmatrix}
2 & p_{1} & p_{2} & \cdots & p_{n}\\
p_{-1} & 2 & p_{1} & \cdots& p_{n-1}\\
\vdots&\vdots&\vdots&&\vdots\\
p_{-n}&p_{-n+1}&p_{-n+2}&\cdots&2
\end{vmatrix}
\end{equation}
is non-negative for each $n\ge1,$ where $p_{-j}=\overline{p_j}$
for $j\ge1.$
Moreover, if $D_1>0,\dots, D_{k-1}>0$ and if $D_k=0,$ then
$P(z)$ is of the following form:
\begin{equation}
P(z)=\sum_{j=1}^{k}
\gamma_j
\frac{1+\varepsilon_{j}z}{1-\varepsilon_{j}z},
\quad \gamma_{j}>0,\quad|\varepsilon_{j}|=1,
\quad \varepsilon_{j}\neq\varepsilon_{h}~(j\neq h).
\end{equation}
\end{lem}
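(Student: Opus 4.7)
The plan is to prove both directions by translating the question into the classical trigonometric moment problem via the Herglotz representation. Every $P\in\PP$ admits a unique Herglotz formula
$$
P(z)=\int_{\T}\frac{\zeta+z}{\zeta-z}\,d\mu(\zeta),
$$
where $\mu$ is a Borel probability measure on $\T$. Expanding the kernel as $(\zeta+z)/(\zeta-z)=1+2\sum_{n\ge 1}\bar\zeta^{\,n}z^n$ and integrating term by term yields $p_n=2\int_{\T}\bar\zeta^{\,n}d\mu(\zeta)$ for $n\ge 1$. If I set $p_0:=2$ and $p_{-j}:=\overline{p_j}$, the same formula $p_n=2\int_{\T}\bar\zeta^{\,n}d\mu$ holds for every $n\in\Z$, so coefficient sequences of $\PP$ are exactly the normalized moment sequences on $\T$.

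For necessity, I would observe that the Toeplitz matrix $T_n=(p_{j-k})_{0\le j,k\le n}$, whose determinant is $D_n$, satisfies
$$
x^{*}T_n\,x=\sum_{j,k=0}^{n}p_{j-k}x_j\bar x_k=2\int_{\T}\Bigl|\sum_{j=0}^{n}x_j\bar\zeta^{\,j}\Bigr|^{2}d\mu(\zeta)\ge 0
$$
for every $x\in\C^{n+1}$, using $\bar\zeta^{\,j-k}=\bar\zeta^{\,j}\zeta^{\,k}$ on $\T$. Hence $T_n$ is positive semi\-definite and $D_n=\det T_n\ge 0$. For sufficiency, I would run this in reverse: if $D_n\ge0$ for every $n$, all the $T_n$ are positive semi\-definite. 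Define a linear functional $\Lambda$ on trigonometric polynomials by $\Lambda(e^{in\theta})=p_{-n}/2$ (so $\Lambda(1)=1$). By the Fej\'er--Riesz factorization, every non-negative trigonometric polynomial equals $|Q(e^{i\theta})|^2$ for some polynomial $Q$, and the positive semi\-definiteness of $T_n$ exactly says $\Lambda(|Q|^2)\ge 0$. Thus $\Lambda$ is positive, extends continuously to $C(\T)$, and by Riesz--Markov is represented by a probability measure $\mu$ on $\T$. The Herglotz integral against this $\mu$ is then a Carath\'eodory function whose Taylor coefficients are the prescribed $p_n$.

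For the degenerate case $D_1,\dots,D_{k-1}>0$ and $D_k=0$, positive definiteness of $T_{k-1}$ together with $\det T_k=0$ forces $\mathrm{rank}\,T_k=k$, so the null space of $T_k$ is one-dimensional, spanned by some $(c_0,\dots,c_k)$ with $c_k\ne0$. The identity above rewritten for this vector gives
$$
0=x^{*}T_k x=2\int_{\T}\Bigl|\sum_{j=0}^{k}c_j\bar\zeta^{\,j}\Bigr|^{2}d\mu(\zeta),
$$
so any representing measure $\mu$ must be supported on the zero set of the polynomial $c_0+c_1w+\cdots+c_kw^{\,k}$. A short argument using that $T_{k-1}$ is strictly positive definite shows this polynomial has exactly $k$ distinct roots $\varepsilon_1,\dots,\varepsilon_k\in\T$ (none inside or outside the unit circle, and all simple). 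The weights $\gamma_j$ are then recovered from the invertible Vandermonde system $\sum_{j=1}^{k}\gamma_j\bar\varepsilon_j^{\,m}=p_m/2$ for $0\le m\le k-1$, and come out positive because $T_{k-1}$ is positive definite. Substituting $\mu=\sum_{j=1}^{k}\gamma_j\delta_{\varepsilon_j}$ into the Herglotz integral gives the claimed representation of $P$.

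The main obstacle is the sufficiency direction and the accompanying degenerate-case analysis: passing from purely algebraic positivity of the determinants $D_n$ to an honest probability measure requires Fej\'er--Riesz plus Riesz--Markov, and showing that the null-space polynomial of $T_k$ has exactly $k$ simple zeros on $\T$ requires a careful linear-algebra argument comparing the kernels of $T_{k-1}$ and $T_k$. Everything else (the Herglotz expansion and the necessity half) is routine manipulation once the integral representation is in hand.
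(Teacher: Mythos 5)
This lemma is not proved in the paper at all: it is the classical Carath\'eodory--Toeplitz theorem, quoted with a citation to Grenander--Szeg\H{o} and Tsuji. So there is no in-paper argument to compare against; your proposal is essentially the standard textbook proof from those references (Herglotz representation, reduction to the trigonometric moment problem, Fej\'er--Riesz plus Riesz--Markov for sufficiency, and a rank/support analysis of the singular Toeplitz form for the degenerate case). The necessity half and the degenerate-case analysis are sketched correctly.

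There is, however, one genuine gap, and it sits at a different place from where you locate the ``main obstacle.'' You assert that if $D_n\ge 0$ for every $n$ then all the matrices $T_n$ are positive semidefinite. For a Hermitian matrix, nonnegativity of the \emph{leading} principal minors does not imply positive semidefiniteness (e.g.\ $\mathrm{diag}(0,-1)$ has both leading minors equal to $0$), so this step is not automatic and is in fact the crux of the sufficiency direction in the degenerate case. What is true, and needs an argument, is the following: by Sylvester's criterion the condition $D_0,\dots,D_{k-1}>0$ gives $T_{k-1}\succ 0$; then Cauchy interlacing plus $\det T_k=0$ forces $T_k\succeq 0$ of rank $k$; but for $T_{k+1},T_{k+2},\dots$ the hypothesis $D_{k+1}\ge 0$ alone no longer pins down the sign of the smallest eigenvalue, because a repeated zero eigenvalue makes the determinant vanish regardless. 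One must invoke the Toeplitz structure (the null vector of $T_k$ propagates and forces the subsequent coefficients $p_{k+1},p_{k+2},\dots$, after which one checks $T_n\succeq 0$ for all $n$) --- this is precisely the extra lemma carried out in Grenander--Szeg\H{o}. Once that is supplied, the rest of your argument (positivity of $\Lambda$ on squares via Fej\'er--Riesz, extension by Riesz--Markov, and the $k$-point support analysis giving distinct $\varepsilon_j\in\partial\D$ with positive weights $\gamma_j$ summing to $1$) goes through as you describe.
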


Since $P(0)=1,$ the numbers $\gamma_j$ must satisfy $\gamma_1+\dots+\gamma_k=1.$
As a special case with $k=2,$ one can deduce the following
useful assertion from Lemma \ref{lem:CT}.

\begin{lem}\label{lem:D2}
Let $P(z)=1+p_1z+p_2z^2+\cdots$ be a Carath\'eodory function
with $p_1\in\R$ and $p_2=p_1^2-2.$
Then $P$ must be of the form
$$
P(z)=\frac{1-z^2}{1-p_1z+z^2}.
$$
Moreover, the functions $f\in\es^*$ and $g\in\K$ determined by
$zf'(z)/f(z)=1+zg''(z)/g'(z)=P(z)$ have the forms
$f=K_\phi$ and $g=L_\phi,$ where $\phi=\arccos[p_1/2].$
\end{lem}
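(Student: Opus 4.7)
The plan is to apply the Carath\'eodory--Toeplitz theorem (Lemma \ref{lem:CT}) with $k=2$. The first step is to verify that, under the hypotheses $p_1\in\R$ and $p_2=p_1^2-2$, the Toeplitz determinant $D_2$ vanishes. Expanding the $3\times 3$ determinant \eqref{eq:Dn} along the top row with $p_{-1}=p_1$ and $p_{-2}=p_2$ gives
\[
D_2=8-4p_1^2+2p_1^2p_2-2p_2^2=8-4p_1^2+2p_2(p_1^2-p_2)=8-4p_1^2+4(p_1^2-2)=0,
\]
so indeed $D_2=0$. Meanwhile $D_1=4-p_1^2\ge0$ by Lemma \ref{lem:C}. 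When $|p_1|=2$ one has $D_1=0$, forcing $P$ to have the one-term form with $\varepsilon_1=\sgn(p_1)$, and the claimed identity reduces to $(1+z)/(1-z)$ or $(1-z)/(1+z)$, which one checks directly.

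In the generic case $|p_1|<2$ we have $D_1>0$ and $D_2=0$, so Lemma \ref{lem:CT} delivers the representation
\[
P(z)=\gamma_1\,\frac{1+\varepsilon_1 z}{1-\varepsilon_1 z}+\gamma_2\,\frac{1+\varepsilon_2 z}{1-\varepsilon_2 z},
\]
with $\gamma_j>0$, $\gamma_1+\gamma_2=1$, $|\varepsilon_j|=1$, and $\varepsilon_1\ne\varepsilon_2$. The next step, which I expect to be the main obstacle, is to pin down the parameters using only the information $p_1\in\R$ and $p_2=p_1^2-2$. Matching coefficients gives $p_1=2(\gamma_1\varepsilon_1+\gamma_2\varepsilon_2)$ and $p_2=2(\gamma_1\varepsilon_1^2+\gamma_2\varepsilon_2^2)$. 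The cleanest way to proceed is to observe that $\overline{P(\bar z)}$ is another Carath\'eodory function with the same first two coefficients; since Lemma \ref{lem:CT} also asserts that $p_3,p_4,\dots$ are uniquely determined once $D_2=0$, we conclude $P(z)=\overline{P(\bar z)}$ and hence $P$ has real Taylor coefficients. The uniqueness of the two-term decomposition then forces the unordered pair $\{\varepsilon_1,\varepsilon_2\}$ to be closed under conjugation, so either both $\varepsilon_j$ are real (which is excluded since $|p_1|<2$) or $\varepsilon_2=\overline{\varepsilon_1}=:e^{-i\phi}$ with corresponding weights $\gamma_1=\gamma_2=1/2$.

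With this reduction the identity $p_1=2\cos\phi$ identifies $\phi=\arccos(p_1/2)$, and a short computation combining the two summands over the common denominator $(1-e^{i\phi}z)(1-e^{-i\phi}z)=1-p_1z+z^2$ yields the numerator $1-z^2$, giving the asserted closed form of $P$. Finally, to identify $f$ and $g$, I would not invert the differential equations but simply verify that $K_\phi$ and $L_\phi$ satisfy them: a direct differentiation using $K_\phi(z)=z/(1-p_1z+z^2)$ gives $K_\phi'(z)=(1-z^2)/(1-p_1z+z^2)^2$, whence $zK_\phi'/K_\phi=(1-z^2)/(1-p_1z+z^2)=P(z)$, and the relation $K_\phi(z)=zL_\phi'(z)$ immediately yields $1+zL_\phi''(z)/L_\phi'(z)=P(z)$ as well. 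Uniqueness of the solutions to these first-order ODEs with the normalization $f(0)=g(0)=0$, $f'(0)=g'(0)=1$ then completes the proof.
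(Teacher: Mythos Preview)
Your proof is essentially correct and follows the same overall scheme as the paper (verify $D_2=0$, invoke the Carath\'eodory--Toeplitz theorem with $k=2$, identify the parameters), but the key step---pinning down $\varepsilon_1,\varepsilon_2,\gamma_1,\gamma_2$---is handled differently. The paper takes imaginary parts of the relations $\gamma_1\varepsilon_1+\gamma_2\varepsilon_2=p_1/2$ and $\gamma_1\varepsilon_1^2+\gamma_2\varepsilon_2^2=p_2/2$, obtains a homogeneous $2\times2$ linear system in $(\gamma_1,\gamma_2)$, and concludes $\cos\phi_1=\cos\phi_2$ from the vanishing of its determinant. Your symmetry argument via $\overline{P(\bar z)}=P(z)$ is more conceptual and avoids this computation; what it costs is that you need uniqueness of the two-atom representation given $p_1,p_2$, which is true (the representing measure in the Herglotz formula is unique) but is not part of Lemma~\ref{lem:CT} as stated in the paper.

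One small gap: your parenthetical ``which is excluded since $|p_1|<2$'' does not by itself rule out $\varepsilon_1=1,\ \varepsilon_2=-1$, because then $p_1=2(\gamma_1-\gamma_2)$ can take any value in $(-2,2)$. The exclusion actually comes from $p_2$: in that case $p_2=2(\gamma_1\cdot 1+\gamma_2\cdot 1)=2$, while the hypothesis $p_2=p_1^2-2$ with $|p_1|<2$ forces $p_2<2$. The paper makes exactly this observation. With that patch, your argument goes through; the verification of $zK_\phi'/K_\phi=P$ and $1+zL_\phi''/L_\phi'=P$ in place of solving the ODEs is a perfectly good alternative to the paper's final sentence.
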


\begin{proof}
We first note that $p_1,p_2\in[-2,2]$ by Lemma \ref{lem:C}.
We next observe that $D_2=2(p_2-2)(p_2-p_1^2+2)=0$ by assumption,
where $D_2$ is given in \eqref{eq:Dn} with $n=2.$
When $D_1=0,$ which is equivalent to the condition $p_1=\pm2,$
the assertion holds clearly with $\phi=0$ or $\pi.$
Thus, we may assume that $D_1=4-p_1^2>0.$
Lemma \ref{lem:CT} now implies that $P$ has the form
$$
P(z)=\gamma_1\frac{1+\varepsilon_1 z}{1-\varepsilon_1 z}
+\gamma_2\frac{1+\varepsilon_2 z}{1-\varepsilon_2 z}
=1+2(\gamma_1\varepsilon_1+\gamma_2\varepsilon_2)z
+2(\gamma_1\varepsilon_1^2+\gamma_2\varepsilon_2^2)z^2+\cdots
$$
for $\gamma_j>0$ and $\varepsilon_j\in\partial\D~(j=1,2)$
with $\gamma_1+\gamma_2=1$ and $\varepsilon_1\ne\varepsilon_2.$
We write $\varepsilon_j=e^{i\phi_j}$ for $\phi_j\in(-\pi,\pi].$
By comparing the coefficients of $z$ and $z^2$ in the above formula,
we obtain the relations
\begin{align}
\label{eq:P1}
\gamma_1\varepsilon_1+\gamma_2\varepsilon_2&=\frac{p_1}2, \\
\label{eq:P2}
\gamma_1\varepsilon_1^2+\gamma_2\varepsilon_2^2&=\frac{p_2}2=\frac{p_1^2}2-1.
\end{align}
If one of $\varepsilon_1, \varepsilon_2$ is real, by \eqref{eq:P1},
the other must be real, too.
This occurs only when $\varepsilon_1=-\varepsilon_2=\pm1$
so that \eqref{eq:P2} implies $\gamma_1+\gamma_2=p_1^2/2-1<1,$
which contraditcs $\gamma_1+\gamma_2=1.$
Hence, $\varepsilon_1$ and $\varepsilon_2$ are both non-real; i.e.,
$\sin\phi_1\sin\phi_2\ne0.$
Taking the imaginary part of \eqref{eq:P1} and \eqref{eq:P2}, we have
$$
\begin{pmatrix}
\sin\phi_1 &\sin\phi_2 \\
\sin2\phi_1 &\sin2\phi_2
\end{pmatrix}
\begin{pmatrix}\gamma_1 \\ \gamma_2\end{pmatrix}
=\begin{pmatrix} 0\\ 0 \end{pmatrix}.
$$
Since $(\gamma_1,\gamma_2)$ is a non-zero vector,
one has
$$
\begin{vmatrix}
\sin\phi_1 &\sin\phi_2 \\
\sin2\phi_1 &\sin2\phi_2
\end{vmatrix}
=2\sin\phi_1\sin\phi_2(\cos\phi_2-\cos\phi_1)=0,
$$
which implies $\cos\phi_1=\cos\phi_2.$
Hence, we conclude that $\varepsilon_2=\bar\varepsilon_1$
and that $\gamma_1=\gamma_2=1/2.$
Put $\phi=\phi_1.$
Then, we take the real part of \eqref{eq:P1} to obtain
$\cos\phi=p_1/2.$
Thus we have seen that $P$ has the required form.
The remaining part can be easily shown by solving the differential
equations $zf'(z)/f(z)=1+zg''(z)/g'(z)=P(z).$
The proof is now complete.
\end{proof}

\begin{rem}
Under the conditions $p_1,p_2\in\R,$
as was seen in the proof, $D_2=0$  if and only if
either $p_2=p_1^2-2$ or $p_2=2.$
The latter case occurs precisely when
$$
P(z)=(1-\gamma)\frac{1+z}{1-z}+\gamma\frac{1-z}{1+z}
=1+2(1-2\gamma)z+2z^2+\cdots
$$
for some constant $\gamma\in[0,1].$
\end{rem}

By making use of the Carath\'eodory-Toeplitz theorem,
Libera and Z\l otkiewicz \cite{LZ82}
showed the following lemma.

\begin{lem}\label{lem:LZ}
Let $-2\le p\le 2$ and $p_2,p_3\in\C.$
There exists a function $P\in\PP$ with $P(z)=1+pz+p_2z^2+p_3z^3+\cdots$
if and only if
\begin{equation}\label{eq:p2}
2p_{2}=p^2+x(4-p^2).
\end{equation}
and
\begin{equation}\label{eq:p3}
4p_{3}=p^3+2(4-p^2)px-p(4-p^2)x^2+2(4-p^2)(1-|x|^2)y
\end{equation}
for some $x,y\in\C$ with  $|x|\leq 1$ and $|y|\leq 1.$
\end{lem}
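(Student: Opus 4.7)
The plan is to derive the characterization from the Carath\'eodory--Toeplitz theorem (Lemma~\ref{lem:CT}) by computing the low-order Toeplitz determinants $D_2$ and $D_3$ explicitly and translating their non-negativity into the stated parametric conditions on $p_2$ and $p_3$. Throughout, I note that $P \in \PP$ with $p_1 = p \in [-2,2]$ automatically forces $D_1 = 4 - p^2 \ge 0$, so the only non-trivial constraints are on $D_2$ and $D_3$ (the condition $D_n\ge 0$ for $n\ge 4$ can be arranged for free by our explicit construction in the sufficiency step).

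First I would handle the $p_2$ condition. Expanding the $3 \times 3$ determinant \eqref{eq:Dn} with $p_1 = p$ real yields, after a short calculation,
$$ 2 D_2 = (4 - p^2)^2 - |2p_2 - p^2|^2. $$
Hence $D_2 \ge 0$ is equivalent to $|2p_2 - p^2| \le 4 - p^2$, which is exactly the existence of $x \in \bD$ with $2p_2 - p^2 = (4 - p^2)\,x$ (taking $x$ arbitrary in the degenerate case $p = \pm 2$). This yields \eqref{eq:p2}.

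Next I would turn to the $p_3$ condition. After substituting $2p_2 = p^2 + (4-p^2)x$ into the $4 \times 4$ determinant $D_3$, the goal is to show that $D_3$ factors as $(4-p^2)(1-|x|^2)$ times a manifestly non-negative expression, so that $D_3 \ge 0$ becomes equivalent to $|y| \le 1$ for the $y$ implicitly defined by
$$ 2(4-p^2)(1-|x|^2)\,y \;=\; 4p_3 - p^3 - 2(4-p^2)p x + p(4-p^2)x^2. $$
The boundary cases $|x| = 1$ or $p = \pm 2$ make the right-hand side vanish and have to be treated separately; here continuity, or a direct appeal to the extremal form of $P$ in Lemma~\ref{lem:CT}, finishes the argument.

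For sufficiency, given $p, x, y$ in the prescribed ranges, I must exhibit a function $P \in \PP$ whose first four Taylor coefficients match. The cleanest route is via the Schur algorithm: the map $P \mapsto \omega(z) = (P(z)-1)/(P(z)+1)$ sends $\PP$ bijectively to the Schur class with $\omega(0)=0$, and prescribing the first three Schur parameters of $\omega/z$ in terms of $p$, $x$, $y$ (with all later Schur parameters taken to be $0$, so that $\omega$ is a finite Blaschke product) produces an honest $P \in \PP$ whose first four coefficients are forced to be the desired ones; this final matching is a direct Taylor expansion. I expect the main obstacle to be the factorization step for $D_3$: the $4 \times 4$ Hermitian determinant expands into a large polynomial in $p, p_2, p_3$ and their conjugates, and organizing the substitution from \eqref{eq:p2} so that the factor $1 - |x|^2$ cleanly appears is not immediate, and likely needs to be carried out by introducing intermediate variables (for instance writing $2p_3 = p^3 + \text{correction}$ and regrouping by powers of $(4-p^2)$).
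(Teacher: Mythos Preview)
The paper does not prove Lemma~\ref{lem:LZ} at all: it is simply quoted as a result of Libera and Z\l otkiewicz~\cite{LZ82}, with the sole comment that it is obtained ``by making use of the Carath\'eodory--Toeplitz theorem.'' Your plan is precisely along those lines, so it agrees with what the paper indicates about the original argument; your computation $2D_2=(4-p^2)^2-|2p_2-p^2|^2$ is correct, and the Schur-parameter viewpoint you propose for the sufficiency step is in fact the natural interpretation of the variables $p/2,\,x,\,y$ in this lemma.
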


\begin{rem}\label{rem:-1}
It is worth observing the following simple fact.
When \eqref{eq:p2} holds with $x=-1,$
one has the relation $p_2=p^2-2$ so that the assumption in
Lemma \ref{lem:D2} is satisfied.
Therefore, the form of $P$ can be described by Lemma \ref{lem:D2} in this case.
\end{rem}

For given real numbers $a,b,c,$ the quantity
\begin{equation}\label{eq:Y}
Y(a,b,c)=\displaystyle\max_{z\in\bD}\big(|a+bz+cz^2|+1-|z|^2\big)
\end{equation}
was used in \cite{OS15a}.
It is also helpful in the current study.

\begin{lem}\label{lem:Y}
Let $a,b,c\in\R$ with $a\ge0$ and $c\ge0.$
Then
$$
Y(a,b,c)=\begin{cases}
a+|b|+c &\quad \text{if}~ |b|\ge 2(1-c),\\
1+a+\dfrac{b^2}{4(1-c)} &\quad\text{if}~ |b|\le 2(1-c).
\end{cases}
$$
The maximum in the definition of $Y(a,b,c)$
is attained at $z=\pm 1$ in the first case according as $b=\pm|b|.$
\end{lem}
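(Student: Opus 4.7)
The plan is to reduce the two-variable optimization over $\bD$ to a one-variable optimization over $r=|z|\in[0,1]$ by first showing that, for each fixed radius, the maximum of $|a+bz+cz^2|$ on the circle $|z|=r$ is attained at one of the real points $z=\pm r$.

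First I would write $z=re^{\iu\theta}$ with $r\in[0,1]$ and $\theta\in\R$, and expand
$$
|a+bz+cz^{2}|^{2}=a^{2}+b^{2}r^{2}+c^{2}r^{4}+2b(a+cr^{2})r\cos\theta+2acr^{2}\cos2\theta.
$$
Setting $u=\cos\theta\in[-1,1]$ and using $\cos2\theta=2u^{2}-1$, this becomes the quadratic
$$
4acr^{2}u^{2}+2b(a+cr^{2})r\,u+\big[a^{2}+b^{2}r^{2}+c^{2}r^{4}-2acr^{2}\big]
$$
in $u$. Because $a,c\ge0$, the leading coefficient $4acr^{2}$ is non-negative, so the quadratic is convex (or linear) in $u$ and therefore attains its maximum on $[-1,1]$ at an endpoint. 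Equivalently, for fixed $r$ the maximum of $|a+bz+cz^{2}|$ on $|z|=r$ is attained at $z=\pm r$. Since $a+cr^{2}\ge0$, one checks directly that
$$
\max_{|z|=r}|a+bz+cz^{2}|=\max\bigl(|a+br+cr^{2}|,\,|a-br+cr^{2}|\bigr)=a+cr^{2}+|b|r,
$$
attained at $z=r$ if $b\ge0$ and at $z=-r$ if $b\le0$.

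Consequently,
$$
Y(a,b,c)=\max_{r\in[0,1]}\varphi(r),\qquad\varphi(r):=a+1+|b|r-(1-c)r^{2}.
$$
I would now split into the two ranges stated in the lemma. If $|b|\ge2(1-c)$, then either $c\ge1$ (so $\varphi$ is convex and the linear term is non-negative, forcing the maximum at $r=1$) or $0<1-c\le|b|/2$, in which case the unconstrained critical point $r^{\ast}=|b|/[2(1-c)]\ge1$ lies outside $[0,1]$ and the concave $\varphi$ is increasing on $[0,1]$, again giving the maximum at $r=1$; the value there is $\varphi(1)=a+|b|+c$, attained at $z=\pm1$ according as $b=\pm|b|$. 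If instead $|b|\le2(1-c)$, then $1-c>0$, $\varphi$ is concave, and $r^{\ast}=|b|/[2(1-c)]\in[0,1]$; evaluating gives $\varphi(r^{\ast})=1+a+b^{2}/[4(1-c)]$.

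The only real obstacle is the first step—justifying that the circular maximum sits at $z=\pm r$—but the convex-quadratic-in-$\cos\theta$ observation above makes this essentially immediate from $a,c\ge0$. The remainder is a routine one-variable calculus case analysis on $[0,1]$.
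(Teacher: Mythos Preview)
Your argument is correct and follows the same route as the paper: reduce $Y(a,b,c)$ to the one-variable maximization $\max_{0\le r\le 1}\big(a+|b|r+cr^{2}+1-r^{2}\big)$ and then do the elementary case split on whether the vertex of the parabola lies in $[0,1]$. The paper merely asserts this reduction (referring to \cite{OS15a} for details), whereas you justify it explicitly via the convex-quadratic-in-$\cos\theta$ observation; apart from that extra detail, the two proofs coincide. One tiny quibble: your sentence ``If instead $|b|\le 2(1-c)$, then $1-c>0$'' fails at the single point $|b|=2(1-c)=0$, but that point is already covered by your first case and by the paper's convention $b^{2}/[4(1-c)]:=0$ there, so nothing is lost.
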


When $|b|=2(1-c)=0,$ we set $b^2/4(1-c)$ to be 0 in the above.
The lemma can easily be verified by the fact that
$$
Y(a,b,c)=\max_{0\le r\le 1}\big(a+|b|r+cr^2+1-r^2\big)
$$
in this case.
See the proof of Proposition 6 in \cite{OS15a} for details.

The next elementary result is helpful to show Lemma \ref{lem:F} below.
We recall that the discriminant $\Delta_P$ of the real
quadratic polynomial
$P(t)=\alpha+2\beta t+\gamma t^2$ is defined to be $\beta^2-\alpha\gamma.$
We will allow a degenerated case such as $\gamma=0$ to define it.
Note that $P(t)=\gamma(t+\beta/\gamma)^2-\Delta_P/\gamma\ge0$
for all $t\in\R$ if $\gamma>0$ and if $\Delta_P\le0.$

\begin{lem}\label{lem:diff}
Let $P(t)$ and $Q(t)$ be (possibly degenerated) real quadratic polynomials.
Suppose that $P>0$ and $Q>0$ on an interval $I\subset\R$
and that $\Delta_P>0.$
If there is a positive constant $T$ such that
\begin{enumerate}
\item
$\Delta_Q\ge T^{3/2}\Delta_P,$ and
\item
$TP(t)\ge Q(t)$ for $t\in I,$
\end{enumerate}
then the function $G(t)=\sqrt{P(t)}-\sqrt{Q(t)}$ is convex on $I.$
\end{lem}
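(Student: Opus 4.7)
The plan is to establish convexity by showing $G''(t)\ge 0$ pointwise on $I$. The central computation is a clean identity for the second derivative of the square root of a quadratic: if $R(t)=\alpha+2\beta t+\gamma t^2$ is a (possibly degenerate) real quadratic that is positive on $I$, then a direct differentiation gives
\[
\bigl(\sqrt{R(t)}\bigr)''=\frac{2R(t)R''(t)-R'(t)^2}{4R(t)^{3/2}}=-\frac{\Delta_R}{R(t)^{3/2}},
\]
since $R''(t)=2\gamma$ and $(R'(t))^2=4(\beta+\gamma t)^2$ combine to make the numerator equal to the constant $-4\Delta_R=-4(\beta^2-\alpha\gamma)$.

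Applying this identity to both $P$ and $Q$, one obtains
\[
G''(t)=\frac{\Delta_Q}{Q(t)^{3/2}}-\frac{\Delta_P}{P(t)^{3/2}},
\]
so the problem reduces to verifying
\[
\Delta_Q\,P(t)^{3/2}\ \ge\ \Delta_P\,Q(t)^{3/2}\qquad (t\in I).
\]

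Hypothesis (i) forces $\Delta_Q\ge T^{3/2}\Delta_P>0$, so both discriminants are positive and both sides of the target inequality have the same sign. Because $P(t)>0$ and $Q(t)>0$ on $I$, hypothesis (ii) may be raised to the $3/2$-power to give $T^{3/2}P(t)^{3/2}\ge Q(t)^{3/2}$. Multiplying (i) by $P(t)^{3/2}$ and chaining the two resulting inequalities yields
\[
\Delta_Q\,P(t)^{3/2}\ \ge\ T^{3/2}\Delta_P\,P(t)^{3/2}\ \ge\ \Delta_P\,Q(t)^{3/2},
\]
which is precisely what is required; hence $G''(t)\ge 0$ on $I$ and $G$ is convex there.

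There is really no serious obstacle here; the lemma is a bookkeeping observation once the identity $(\sqrt{R})''=-\Delta_R/R^{3/2}$ is spotted. The only small points worth noting are that the degenerate case $\gamma=0$ still fits the identity (with $\Delta_R=\beta^2$), and that the positivity of $P$ and $Q$ on $I$ is needed both to make the $3/2$-powers meaningful and to legitimize raising hypothesis (ii) to the $3/2$-power. The shape of the hypotheses, with the precise exponent $3/2$ in (i), is engineered so that the $T^{3/2}$ factor appearing on multiplying (i) by $P^{3/2}$ is exactly absorbed by the $3/2$-power of (ii), which is what makes the argument cancel so cleanly.
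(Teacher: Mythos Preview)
Your proof is correct and follows essentially the same approach as the paper: both compute $G''(t)=\Delta_Q/Q(t)^{3/2}-\Delta_P/P(t)^{3/2}$ via the identity $(\sqrt{R})''=-\Delta_R/R^{3/2}$, then combine hypothesis (i) with the $3/2$-power of hypothesis (ii) to conclude $G''\ge 0$. Your write-up is slightly more detailed in deriving the key identity, but the argument is the same.
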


\begin{proof}
By (i), we have
$$
G''(t)=-\frac{\Delta_P}{P(t)^{3/2}}+\frac{\Delta_Q}{Q(t)^{3/2}}
\ge\Delta_P\left[\left(\frac{T}{Q(t)}\right)^{3/2}-\frac1{P(t)^{3/2}}
\right].
$$
Thus the condition (ii) implies that $G''(t)\ge0$ for $t\in I.$
\end{proof}

The following technical result will be used in the proof of Theorem
\ref{thm:main}.

\begin{lem}\label{lem:F}
Let $u=6p^2/(4-p^2),~ v=2,~ a=3p^3/(4-p^2),~b=5p/2$ and $c=p/2$
for $4/3\le p\le \sqrt2$ and consider the function
$$
F(z)=|u+vz|-\big|a+bz-cz^2\big|.
$$
Then $F(z)\le F(-|z|)$ for $z\in\bD.$
\end{lem}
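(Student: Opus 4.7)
The plan is to fix $r = |z|$ and reduce the angular maximum over $\{z:|z|=r\}$ to a one-dimensional convexity argument on $[0,1]$. Since all the parameters $u,v,a,b,c$ are real, $F(\bar z)=F(z)$, so I may restrict to $z = re^{i\theta}$ with $\theta\in[0,\pi]$. The given values satisfy $a = cu$ and $b = 5c$, whence $F(z) = |u+2z| - c\,|u+5z-z^2|$. Introducing $\tau = \cos^2(\theta/2)\in[0,1]$ (with $\tau=0$ corresponding to $z=-r$ and $\tau=1$ to $z=r$), direct computation yields
\[
|u+2z|^2 = P(\tau) := (u-2r)^2 + 8ur\tau
\]
and
\[
|u+5z-z^2|^2 = Q(\tau) := (u-5r-r^2)^2 + 4r\tilde E\,\tau - 16ur^2\tau^2,
\]
where $\tilde E = 5(u-r^2)+4ur$. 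Thus $G(\tau):=F(re^{i\theta}) = \sqrt{P(\tau)} - c\sqrt{Q(\tau)}$, a difference of square roots of a degenerate and a genuine (concave) real quadratic.

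Next, I would invoke Lemma \ref{lem:diff} applied to the pair $(P,\,c^2 Q)$ on $[0,1]$ to show $G$ is convex there. The discriminants are $\Delta_P = 16u^2r^2 > 0$ and $\Delta_{c^2Q} = 4c^4 r^2[\tilde E^{2} + 4u(u-5r-r^2)^{2}] > 0$. Taking $T$ to be the least constant satisfying $TP(\tau) \ge c^2 Q(\tau)$ on $[0,1]$, condition (ii) of the lemma holds by construction, and condition (i), namely $\Delta_{c^2Q} \ge T^{3/2}\Delta_P$, reduces to an explicit polynomial inequality in $p$ and $r$ that one verifies on $p\in[4/3,\sqrt{2}]$ and $r\in[0,1]$. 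Convexity of $G$ on $[0,1]$ then gives $G(\tau) \le \max\{G(0), G(1)\} = \max\{F(-r),F(r)\}$.

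The remaining task is to show $F(-r) \ge F(r)$, which is purely one-dimensional. Two cases arise, according to the sign of $u-5r-r^2$. If $u-5r-r^2\ge 0$, a direct computation gives $F(-r) - F(r) = (5p-4)r \ge 0$, since $5p-4 \ge 8/3 > 0$ on $[4/3,\sqrt 2]$. If $u-5r-r^2 < 0$, one finds $F(-r)-F(r) = 2c(u-r^2) - 4r$, and since $r\mapsto 4r/[p(u-r^2)]$ is strictly increasing on $(0,1]$, the worst case is $r=1$; the required inequality becomes $7p^3 + 4p^2 - 4p - 16 \ge 0$, and this holds on $[4/3,\sqrt 2]$ because the value at $p=4/3$ equals $64/27 > 0$ and the derivative $21p^2 + 8p - 4$ is positive on that interval. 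Combining the two steps yields $G(\tau) \le G(0) = F(-r)$ for all $\tau\in[0,1]$, which is precisely the claim.

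The main obstacle lies in the convexity step, where I must reconcile the pointwise bound $TP \ge c^2 Q$ on $[0,1]$ with the discriminant bound $\Delta_{c^2Q} \ge T^{3/2}\Delta_P$. Equivalently, one must verify $\max_{\tau\in[0,1]}(Q/P) \le [c\Delta_Q/\Delta_P]^{2/3}$, which is a concrete but intricate algebraic inequality in the two parameters $p$ and $r$. This is where the specific range $4/3 \le p \le \sqrt{2}$ is used in an essential way, and constitutes the most delicate computation of the proof.
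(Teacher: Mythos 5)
Your overall strategy is the same as the paper's: reduce the angular maximization to convexity of a difference of square roots of real quadratics via Lemma \ref{lem:diff}, conclude that the maximum over the circle $|z|=r$ is attained at $z=\pm r$, and then compare $F(-r)$ with $F(r)$ by a two-case analysis on the sign of $a+bz-cz^2$ at $z=-r$. Your algebra for $P(\tau)$, $Q(\tau)$, the discriminants, and the endpoint comparison is correct (your two cases and the resulting expressions $(5p-4)r$ and $2c(u-r^2)-4r$ agree with the paper's, and your monotonicity argument reducing the second case to $7p^3+4p^2-4p-16\ge0$ at $r=1$ is sound).

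However, there is a genuine gap: the convexity step, which is the heart of the lemma, is announced but not proved. You take $T=\max_{\tau\in[0,1]}\bigl(c^2Q(\tau)/P(\tau)\bigr)$ so that condition (ii) of Lemma \ref{lem:diff} holds by fiat, and then assert that condition (i), $\Delta_{c^2Q}\ge T^{3/2}\Delta_P$, ``reduces to an explicit polynomial inequality in $p$ and $r$ that one verifies'' --- and you yourself flag this as ``the most delicate computation of the proof'' without carrying it out. As it stands nothing has been verified, and your formulation actually makes the verification harder than necessary: the maximum of a ratio of two quadratics over $[0,1]$ has no clean closed form (it may occur at an interior critical point), so condition (i) with this $T$ is not a polynomial inequality in $p,r$ in any usable sense. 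The way to close the gap is to choose an explicit, simple upper bound for $T$ instead; the paper takes $T=5p^2/8$, checks condition (i) by the crude bounds $\Delta_2/\Delta_1\ge(100-p^2)p^4/\bigl(64(4-p^2)\bigr)\ge 884/405>2\ge(5p^2/8)^{3/2}$, and checks condition (ii) by showing that the quadratic $T\,P-c^2Q$ has negative discriminant, which after clearing denominators becomes a two-variable polynomial inequality $H(p^2,r^2)>0$ handled by monotonicity in each variable. Some such explicit choice of $T$ and an actual verification of both conditions is required before your argument constitutes a proof.
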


\begin{proof}
Fix an $r\in(0,1].$
A standard computation yields the expression $F(re^{i\theta})=G(t),$
where $t=\cos\theta,$
$$
G(t)=\sqrt{A+2Bt}-\sqrt{L+2Mt-Nt^2},
$$
and $A=u^2+v^2r^2,~ B=uvr,~ L=(a+cr^2)^2+b^2r^2,~
M=br(a-cr^2),~N=4acr^2.$
To apply Lemma \ref{lem:diff}, we put $\Delta_1=B^2,~\Delta_2=M^2+LN$
and $T=5p^2/8$ and we will show the two inequalities
\begin{equation}\label{eq:Q1}
\Delta_2\ge T^{3/2}\Delta_1
\end{equation}
and
\begin{equation}\label{eq:Q2}
T(A+2Bt)\ge L+2Mt-Nt^2\quad \text{for}~ -1\le t\le 1.
\end{equation}
We first note that
$$
\frac{\Delta_2}{\Delta_1}=\frac{(100-p^2)[6p^2+(4-p^2)r^2]^2}{48^2(4-p^2)}
\ge \frac{(100-p^2)p^4}{64(4-p^2)}.
$$
Since the last quantity is increasing in $0<p<2(\sqrt6-1),$
we have $\Delta_2/\Delta_1\ge 884/405>2$ for $4/3\le p\le\sqrt2.$
On the other hand, $T=5p^2/8\le 5/4$ and thus $T^{3/2}\le 5\sqrt5/8<2.$
The proof of \eqref{eq:Q1} is now completed.
Next we consider the quadratic polynomial
\begin{align*}
R(t)&=T(A+2Bt)-(L+2Mt-Nt^2) \\
&=
\frac{p^2[54p^4-3(4-p^2)(20-p^2)r^2-(4-p^2)^2r^4]}{4(4-p^2)^2}
+\frac{5p^2r^3}2 t
+\frac{6p^4r^2}{4-p^2}t^2.
\end{align*}
Then its discriminant is
$$
\Delta_R=-\frac{p^4r^2}{16(4-p^2)^3}H(p^2,r^2),
$$
where
$$
H(x,y)=36^2x^3-72x(4-x)(20-x)y-(100-x)(4-x)^2y^2.
$$
Since the partial derivative $H_y(x,y)$ is negative
for $0<x\le 2$ and $y>0,$ one gets
$H(p^2,r^2)\ge H(p^2,1)$ for $4/3\le p\le\sqrt2$ and $0\le r\le 1.$
It is easy to verify that $H(x,1)=1225x^3+1620x^2-4944x-1600$
is increasing in $(4/3)^2\le x.$
Hence, $H(p^2,1)\ge H((4/3)^2,1)=2^6\cdot 18379/3^6>0$
for $4/3\le p\le\sqrt2.$
Therefore, we have proved the inequality $\Delta_R<0$
and therefore \eqref{eq:Q2}.

Convexity of $G$ implies that $G(t)\le\max\{G(1),G(-1)\}
=\max\{F(r), F(-r)\}$ for $t\in[-1,1].$
We will show that $F(r)\le F(-r).$
To this end, we first observe that the polynomial
$Q(x)=a+bx-cx^2$ has two roots $x_-$ and $x_+$ with $0<-x_-<x_+$
because $c>0, b>0.$
Moreover, the inequality $Q(1)=a+b-c=a+2p>0$ implies $x_+>1.$
We note also that for $x\in\R,$  $x_-\le x\le x_+$ if and only
if $Q(x)\ge 0.$
In view of $u>v=2,$ we now obtain
\begin{align*}
F(-r)-F(r)&=[(u-vr)-(u+vr)]-\big(|Q(-r)|-|Q(r)|\big) \\
&=-4r-|Q(-r)|+|Q(r)|  \\
&=\begin{cases}
-4r+2br &\text{if}~ r<|x_-|, \\
-4r+2a-2cr^2 &\text{if}~|x_-|\le r.
\end{cases}
\end{align*}
Here, we see that $-4r+2br=(5p-4)r\ge0.$
We also have $-4r+2a-2cr^2=[(6+r^2)p^3+4p^2r-4pr^2-16r]/(4-p^2).$
By monotonicity in $p\ge 4/3,$ we observe that
$(6+r^2)p^3+4p^2r-4pr^2-16r\ge 16(24-15r-5r^2)/27>0.$
Therefore, we conclude now that $F(-r)-F(r)\ge0$ at any event,
as required.
\end{proof}

\section{Proof of Theorem \ref{thm:2}}
We recall the well-known fact that for an analytic function $f$
on $\D$ with $f(0)=0, f'(0)=1,$
$f\in\K$ if and only if $\Re[1+zf''(z)/f'(z)]>0$ on $|z|<1.$
Therefore, for $f\in\K$, there is a function $P\in\PP$ such that
\begin{equation}\label{eq:conv}
f'(z)+zf''(z)=P(z)f'(z).
\end{equation}
We write
$$
f(z)=z+\sum_{n=2}^{\infty}a_{n}z^{n}
\aand
P(z)=1+\sum_{n=1}^{\infty}p_{n}z^n.
$$
Equating the coefficients of $z^n$ in both sides of \eqref{eq:conv} for
$n=1, 2, 3,$ we obtain
\begin{equation}\label{eq:234}
a_{2}=\frac{p_{1}}{2}, \quad
a_{3}=\frac{p_{1}^2+p_{2}}{6},
\aand
a_{4}=\frac{p_1^3+3p_1p_2+2p_3}{24}.
\end{equation}
For $f\in\K(p),$ we have $p_1=2a_{2}=p.$
Therefore, by Lemma \ref{lem:LZ}, for some $x\in\bD$ we have
\begin{align*}
|a_{3}-a_{2}|&=\left|\frac{p_{1}^2+p_{2}}{6}-\frac{p_{1}}{2}\right|
=\left|\frac{x(4-p^2)}{12}-\frac{p(2-p)}{4}\right|\\
&\leq\frac{4-p^2}{12}+\frac{p(2-p)}{4}=\frac{-2p^2+3p+2}{6}
=\frac{(2p+1)(2-p)}{6}.
\end{align*}
Here, equality occurs if $x=-1.$
Thus \eqref{eq:thm2-1} has been shown.
Since $p\in[0,2]$, we have
\begin{align*}
|a_{3}-a_{2}|\le
\frac{-2p^2+3p+2}{6}
=\frac{-2(p-3/4)^2+25/8}{6}\leq\frac{25}{48},
\end{align*}
which proves \eqref{eq:thm2-3}.
Here, equalities hold simultaneously precisely when $x=-1$ and $p=3/4.$
Now Lemma \ref{lem:D2} together with Remark \ref{rem:-1}
gives the required form of the extremal function.

Next we show \eqref{eq:thm2-2}.
By substituting \eqref{eq:p2} and \eqref{eq:p3} into \eqref{eq:234}, we have
\begin{align*}
|a_{4}-a_{3}|
&=\frac1{24}\big|2p_3+3p_1p_2-4p_2+p_1^3-4p_1^2\big| \\
&=\frac1{24}\left|(4-p^2)(1-|x|^2)y+3p^3-6p^2
+\frac{(4-p^2)(5p-xp-4)x}{2}\right|\\
&\leq\frac{(4-p^2)(1-|x|^2)}{24}+\frac1{24}\left|6p^2-3p^3
+\frac{(4-p^2)((4-5p)x+x^2p)}{2}\right|\\
&\leq\frac{4-p^2}{24}Y(a,b,c),
\end{align*}
where $Y(a,b,c)$ is given in \eqref{eq:Y} and
\begin{align*}
a=\frac{3p^2}{2+p},~\quad~
b=\frac{4-5p}{2},~\quad~
c=\frac{p}{2}.
\end{align*}
Note here that equalities hold simultaneously in the above
for a suitable choice of $x$ and $y.$
A simple computation tells us that for $p\in[0,2],$
$|b|<2(1-c)$ if and only if $0<p<8/7.$
Thus Lemma \ref{lem:Y} yields
\begin{equation*}
Y(a,b,c)=
\left\{
\begin{aligned}
&1+\frac{3p^2}{2+p}+\frac{(5p-4)^2}{8(2-p)}
=\frac{p^3+50p^2-64p+64}{8(4-p^2)},
&\text{if}\quad 0\le p\le \frac{8}{7},\\
&a-b+c
=\frac{2(3p^2+2p-2)}{2+p},&\text{if}\quad 8/7\leq p\leq2.
\end{aligned}
\right.
\end{equation*}
Hence, we have the estimate $|a_4-a_3|\le \psi(p)$
for $f\in\K(p),$ where
\begin{equation}\label{eq:psi}
\psi(p)=
\left\{
\begin{aligned}
&\frac{p^3+50p^2-64p+64}{192},
\quad&\text{if}~ 0\leq p<\frac{8}{7},\\
&\frac{(2-p)(3p^2+2p-2)}{12}=\frac{-3p^3+4p^2+6p-4}{12},
\quad&\text{if}~ 8/7\leq p\leq2.
\end{aligned}
\right.
\end{equation}

Since $\psi(p)$ is convex in $0\le p\le 8/7,$
we see that $\psi(p)\le\max\{\psi(0),\psi(8/7)\}
=\max\{\frac13,\frac{103}{343}\}=1/3.$
On the other hand, we see that $\psi(p)$ takes its maximum value
in $8/7\le p\le 2$ at $p=(4+\sqrt{70})/9,$ which is
$(35\sqrt{70}-49)/729\approx 0.334473>1/3.$
Therefore, the maximum of $\psi(p)$ is taken at
$p=(4+\sqrt{70})/9$ with the choice $x=-1$ so that 
the required form of the extremal function follows from
Lemma \ref{lem:D2} and Remark \ref{rem:-1}.

\section{Proofs of Theorems \ref{thm:main} and \ref{thm:main2}}
We begin with the proof of Theorem \ref{thm:main}.

\begin{proof}[Proof of Theorem \ref{thm:main}]
By Alexander's Theorem (see \cite[Theorem 2.12]{Duren:univ}),
$f(z)=z+\sum_{n=2}^{\infty}a_{n}z^n$ is convex
if and only if $zf'(z)=z+\sum_{n=2}^{\infty}na_{n}z^n$is starlike.
Thus, by Theorem A, we have
\begin{equation}\label{eq:ThmA}
-1\le (n+1)|a_{n+1}|-n|a_{n}|\leq 1
\end{equation}
for a convex function $f$ and
equalities occur only when $f$ is a rotation of $L_\phi$
given in \eqref{eq:L} for some $\phi\in\R.$
In particular,
$$
|a_{n+1}|-|a_{n}|\leq
|a_{n+1}|-\frac{n}{n+1}|a_{n}|\leq\frac{1}{n+1}.
$$
Note that the function $f=L_{\pi/n}$ satisfies $a_n=0,~ a_{n+1}=-1/(n+1).$
Thus we have $\U_n=1/(n+1),$ which proves part (i).

As we noted in Introduction, to compute $\V_n,$ we can restrict
the range of $f$ to $\K^+=\bigcup_{0\le p\le 2}\K(p).$
For $f(z)=z+a_2z^2+a_3z^3+\cdots$ in $\K(p)$ with $0\le p\le 2,$
by \eqref{eq:234} and \eqref{eq:p2}, we have
\begin{align*}
|a_{2}|-|a_{3}|&=\frac{p_{1}}{2}-\left|\frac{p_{1}^2+p_{2}}{6}\right|
=\frac{p}{2}-\frac{|3p^2+x(4-p^2)|}{12}\\
&\leq\left\{
\begin{aligned}
&\frac{p}{2}\leq\frac{1}{2},\quad &\text{for}\quad 0\le p\leq 1,\\
&\frac{p}{2}-\left(\frac{p^2}{4}-\frac{4-p^2}{12}\right)
\leq\frac{1}{2},\quad &\text{for}\quad 1\leq p\leq 2.
\end{aligned}
\right.
\end{align*}
We have thus obtained $\V_{2}=1/2$
and equality is attained when $p=1$, $x=-1,$
which corresponds to the function $L_\phi$ with $\phi=\arccos[1/2]
=\pi/3$ by Lemma \ref{lem:D2} and Remark \ref{rem:-1}.
Now the proof of part (ii) is complete.

In the proof of Theorem \ref{thm:2},
we saw that $|a_{4}-a_{3}|\leq1/3$ when $0\le p\le 8/7.$
On the other hand, because $(-3p^3+4p^2+6p-4)/12-1/3
=-(3p-4)(p^2-2)/12,$ we see that $\psi(p)\ge 1/3$ only if
$4/3\le p\le \sqrt2.$
Hence, by virtue of Theorem \ref{thm:2}, we can harmlessly assume
that $4/3\le p\le \sqrt2$ to show $|a_3|-|a_4|\le 1/3$ for $f\in\K(p).$

By \eqref{eq:234}, \eqref{eq:p2}, \eqref{eq:p3}, we have
\begin{align*}
|a_{3}|-|a_{4}|
&=\left|\frac{p_{1}^2+p_{2}}{6}\right|
-\left|\frac{p_{1}p_{2}}{8}+\frac{p_{1}^3}{24}+\frac{p_{3}}{12}\right|\\
=&\frac{(4-p^2)(1-|x|^2)}{24}+\frac{4-p^2}{24}\left(
\left|\frac{6p^2}{4-p^2}+2x\right|
-\left|\frac{(5x-x^2)p}{2}+\frac{3p^3}{4-p^2}\right|\right).
\end{align*}
We now apply Lemma \ref{lem:diff} to get
\begin{align*}
|a_{3}|-|a_{4}|&\leq
\frac{(4-p^2)(1-r^2)}{24}
+\frac{4-p^2}{24}\left(\left(\frac{6p^2}{4-p^2}-2r\right)
-\left|\frac{(-5r-r^2)p}{2}+\frac{3p^3}{4-p^2}\right|\right)\\
&=\frac{4-p^2}{24}\left(-r^2-2r+\frac{6p^2}{4-p^2}+1
-\frac p2\left|r^2+5r-\frac{6p^2}{4-p^2}\right|\right) \\
&=:\frac{4-p^2}{24}\Phi(r).
\end{align*}

Let $r_{0}$ be the unique positive root of the quadratic
polynomial $P(r)=r^2+5r-6p^2/(4-p^2)$ in $r.$
Since $P(1)=12(2-p^2)/(4-p^2)\ge0,$ we have $0<r_0\le 1.$
When $0\leq r\leq r_{0}$,
\begin{align*}
\Phi(r)&=\left(\frac{p}{2}-1\right)r^2+\left(\frac{5p}{2}-2\right)r
+1+\frac{3p^2}{2+p},
\end{align*}
Since the right-hand side is increasing in $0\le r\le 1,$ we
have $\Phi(r)\le\Phi(r_0)$ for $0\le r\le r_0.$
When $r_{0}\leq r\leq 1$,
\begin{align*}
\Phi(r)&=\left(-\frac{p}{2}-1\right)r^2
+\left(-\frac{5p}{2}-2\right)r+1+\frac{3p^2}{2+p}.
\end{align*}
The right-hand side is decreasing in $0\le r\le 1$
so that $\Phi(r)\le\Phi(r_0)$ for $r_0\le r\le 1.$
Hence, we obtain
\begin{align*}\label{r1}
|a_{3}|-|a_{4}|&\leq \frac{4-p^2}{24}\Phi(r_0)
=\frac{4-p^2}{24}\left(-r_0^2-2r_0+\frac{6p^2}{4-p^2}+1\right) \\
&=\frac{4-p^2}{24}(3r_0+1).
\end{align*}
The inequality $(4-p^2)(3r_0+1)/24\le 1/3$ is equivalent to
$r_0\le(4+p^2)/3(4-p^2).$
This can indeed be verified as
$$
P\left(\frac{4+p^2}{3(4-p^2)}\right)
=\frac{8(2-p^2)(16-5p^2)}{9(4-p^2)^2}\ge0.
$$
Here, equality holds when $p=\sqrt2.$
By tracing back the above proof, we see that all equalities
hold, in addition, when $x=y=-1.$
Again, by Lemma \ref{lem:D2} and Remark \ref{rem:-1},
we see that an extremal function is given as $L_\phi$
with $\phi=\arccos[\sqrt2/2]=\pi/4.$
Thus the proof of part (iii) has been complete.
\end{proof}

We finish the note by proving Theorem \ref{thm:main2}.

\begin{proof}[Proof of Theorem \ref{thm:main2}]
First, we recall the fact that $|a_n|\le 1$ for a convex function
$f(z)=z+a_2z^2+\cdots$ (see \cite[p.~45]{Duren:univ}).
Thus, the first inequality in \eqref{eq:ThmA} together with this yields
$$
|a_n|-|a_{n+1}|=\frac{n|a_n|-(n+1)|a_{n+1}|}{n+1}+\frac{|a_n|}{n+1}
\le \frac2{n+1}.
$$
Here, we note that equality never holds above in view of the equality cases
in Theorem A.
Thus the right-hand inequality in the theorem has been proved.

As we noted in Introduction, the function $L_\phi$ given in \eqref{eq:L}
belongs to the class $\K.$
Therefore, we have
\begin{equation}\label{eq:lb}
\V_n\ge \max_{\phi\in\R}\Psi_n(\phi),
\end{equation}
where
$$
\Psi_n(\phi)=\left|\frac{\sin{n\phi}}{n\sin\phi}\right|
-\left|\frac{\sin(n+1)\phi}{(n+1)\sin\phi}\right|.
$$
We certainly have $\Psi_n(\theta_n)=1/n,$ which implies
$\V_n\ge 1/n.$
Here,
$$
\theta_n=\frac{\pi}{n+1}.
$$
When $n=2$ or $3,$ $1/n$ is an extremal value for $\V_n$ as we saw before.
However, this is not the case when $n\ge 4.$
Indeed, we will show that the left derivative $\Psi_n'(\theta_n-)$ is
negative for $n\ge 4,$ which implies that $\Psi_n(\theta_n-\delta)>1/n$
for a small enough $\delta>0.$
For $0<\phi<\theta_n,$ we have the expression
$$
\Psi_n(\phi)=\frac{\sin{n\phi}}{n\sin\phi}
-\frac{\sin(n+1)\phi}{(n+1)\sin\phi}.
$$
Therefore, we compute
$$
\Phi_n'(\theta_n-)=\frac{1-\frac{n+1}n\cos\theta_n}{\sin\theta_n}
=\frac{n+1}n\cdot\frac{H(\frac1{n+1})}{\sin\theta_n},
$$
where
$$
H(x)=1-x-\cos\pi x.
$$
We note that $H(x)$ is strictly convex in $0<x<1/2$ because
$H''(x)=\pi^2\cos\pi x>0$ there.
Since $H(0)=0,~ H(1/5)=(11-5\sqrt5)/20=-0.0090\dots<0,$
the inequality $H(x)<0$ holds for $0<x\le 1/5.$
Hence, $\Phi_n'(\theta_n-)<0$ for $n\ge 4$ as required.
\end{proof}

The above proof showed that
$$
\max_{\phi\in\R}\Psi_n(\phi)>\frac1n,\quad n=4,5,6,\dots.
$$
It seems difficult to find the exact value of this maximum.
For instance, a numerical computation tells us
that the maximum of $\Psi_4(\phi)$ is attained at
$\phi_4\approx 0.19834315\pi$ and its value is approximately
$0.250049846.$
See Figure 1, which was generated by Mathematica Ver.~10.2,
for the graph of the function $\Psi_4(\phi).$
The first peak is located slightly left to the bending point
$\phi=\theta_4.$
It might be meaningful to ask whether equality holds or not
in \eqref{eq:lb} for $n\ge4.$

\medskip
\noindent
{\bf Acknowledgements.}
The present note grew out of an unpublished paper on successive coefficients
of convex functions by
Professor Derek Thomas and discussions with him.
The authors would like to express their sincere thanks to him for
kind suggestions.

\begin{figure}[htbp]
\begin{center}
\includegraphics[width=0.8\textwidth]{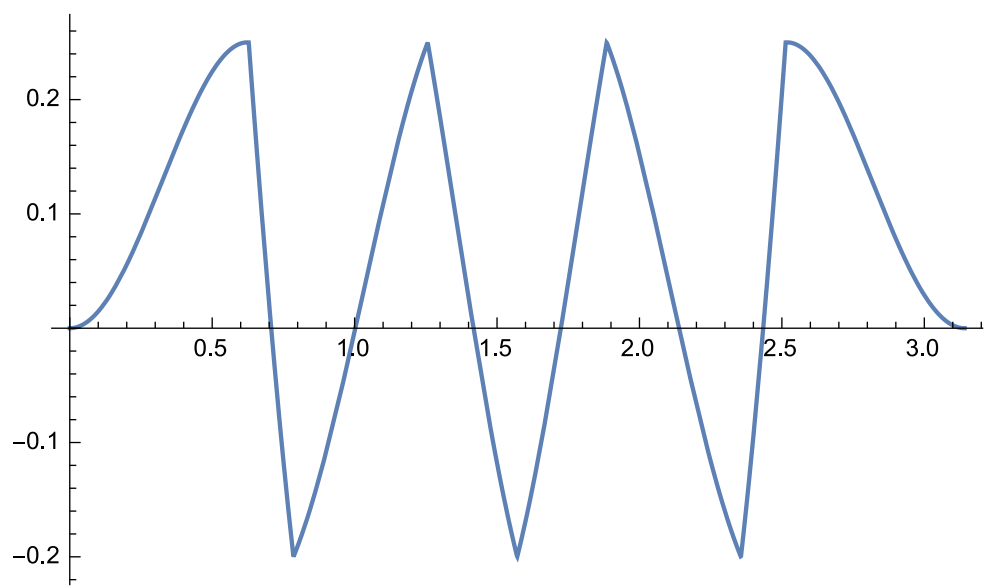}
\caption{The graph of $\Psi_4(\phi).$}
\end{center}
\end{figure}


\begin{thebibliography}{10}

\bibitem{dB85}
L.~de~Branges, \emph{A proof of the {B}ieberbach conjecture}, Acta Math.
  \textbf{154} (1985), 137--152.

\bibitem{Duren:univ}
P.~L. Duren, \emph{Univalent {F}unctions}, Springer-Verlag, 1983.

\bibitem{GS:Toep}
U.~Grenander and G.~Szeg{\H o}, \emph{Toeplitz forms and their applications},
  Univ. of California Press, Berkeley and Los Angeles, 1958.

\bibitem{Grin76}
A.~Z. Grinspan, \emph{Improved bounds for the difference of adjacent
  coefficients of univalent functions {{\rm (Russian)}}}, Questions in the
  {M}odern {T}heory of {F}unctions (Novosibirsk), Sib. Inst. Mat., 1976,
  pp.~41--45.

\bibitem{Hayman63}
W.~K. Hayman, \emph{On succesive coefficients of univalent functions}, J.
  London Math. Soc. \textbf{38} (1963), 228--243.

\bibitem{Leung78}
Y.~Leung, \emph{Successive coefficinets of starlike functions}, Bull. London
  Math. Soc. \textbf{10} (1978), 193--196.

\bibitem{LZ82}
R.~J. Libera and E.~J.~Z\l otkiewicz, \emph{Early coefficients of the inverse
  of a regular convex function}, Proc. Amer. Math. Soc. \textbf{85} (1982),
  225--230.

\bibitem{OS15a}
R.~Ohno and T.~Sugawa, \emph{Coefficient estimates of analytic endomorphisms of
  the unit disk fixing a point with applications to concave functions},
  arXiv:1512.03148.

\bibitem{Rob81}
M.~S. Robertson, \emph{Univalent functions starlike with respect to a boundary
  point}, J. Math. Anal. Appl. \textbf{81} (1981), 327--345.

\bibitem{SS43}
A.~C. Schaeffer and D.~C. Spencer, \emph{The coefficients of schlicht
  functions}, Duke Math. J. \textbf{10} (1943), 611--635.

\bibitem{Tsuji:Potential}
M.~Tsuji, \emph{Potential {T}heory in {M}odern {F}unction {T}heory}, Maruzen,
  Tokyo, 1959.

\bibitem{Yanagihara06}
H.~Yanagihara, \emph{Regions of variability for convex functions}, Math. Nachr.
  \textbf{279} (2006), 1723--1730.

\end{thebibliography}

\def\cprime{$'$} \def\cprime{$'$} \def\cprime{$'$}
\providecommand{\bysame}{\leavevmode\hbox to3em{\hrulefill}\thinspace}
\providecommand{\MR}{\relax\ifhmode\unskip\space\fi MR }
\providecommand{\MRhref}[2]{%
  \href{http://www.ams.org/mathscinet-getitem?mr=#1}{#2}
}
\providecommand{\href}[2]{#2}

\end{document}